\newtheorem{theorem}{Theorem}[section]
\newtheorem{lemma}[theorem]{Lemma}
\newtheorem{proposition}[theorem]{Proposition}
\newtheorem{corollary}[theorem]{Corollary}
\theoremstyle{definition}
\newtheorem{definition}[theorem]{Definition}
\newtheorem{example}[theorem]{Example}
\theoremstyle{remark}
\newtheorem{remark}[theorem]{Remark}
\numberwithin{equation}{section}
\begin{document}
	
	\setcounter{page}{1}
	
	\title[Controlled finite continuous frames]{Controlled finite continuous frames}

	\author[H. Massit, M. Rossafi, C.  Park]{Hafida Massit$^{1}$, Mohamed Rossafi$^{2}$ and Choonkil Park$^{3*}$}
	
	\address{$^{1}$Department of Mathematics, Faculty Of Sciences, University of Ibn Tofail, Kenitra, Morocco}
	\email{\textcolor[rgb]{0.00,0.00,0.84}{massithafida@yahoo.fr}}
	\address{$^{2}$LaSMA Laboratory,  Department of Mathematics,  Faculty of Sciences, Dhar El Mahraz University Sidi Mohamed Ben Abdellah, Fes, Morocco}
	\email{\textcolor[rgb]{0.00,0.00,0.84}{rossafimohamed@gmail.com; mohamed.rossafi@usmba.ac.ma}}
	\address{$^{3}$Research Institute for Natural Sciences, Hanyang University, Seoul 04763, Republic of Korea}
	\email{\textcolor[rgb]{0.00,0.00,0.84}{baak@hanyang.ac.kr}}

	\subjclass[2010]{41A58, 42C15, 46L05.}
	
	\keywords{finite frame;  controlled continuous finite frame;  Parseval controlled frame.}
	
	\date{
		\newline \indent $^{*}$Corresponding author: Choonkil Park (email: baak@hanyang.ac.kr, fax: +82-2-2281-0019, orcid: 0000-0001-6329-8228).}

	\begin{abstract} In this  paper, we present controlled finite continuous frames in a finite dimensional Hilbert space and  we study some properties of them. Parseval controlled integral frames are presented and we characterize  operators that construct controlled integral finite frames. 
	\end{abstract}
	\maketitle

	\section{Introduction and preliminaries}
	
The concept of frames in Hilbert spaces has been introduced by Duffin and Schaffer \cite{Duf} in 1952 to study some deep problems in nonharmonic Fourier series, after the fundamental paper \cite{DGM} by Daubechies, Grossman and Meyer, frame theory began to be widely used, particularly in the more specialized context of wavelet frames and Gabor frames. The majority of these applications requires frames in finite-dimensional spaces. For example, Jamali et al \cite{H} and Javanshiri et al \cite{JAN}, were obtained results that are interesting in applications of frames.

Recently, controlled frames were introduced by Balzas \cite{PDA}, Antoine and Grybos to improve the  numerical efficiency of iterative algorithms for inverting the frame operator on abstract Hilbert spaces \cite{BAL}, however they are used earlier in \cite{bo} for spherical wavelets. For more details, the reader can refer to  \cite{PDA, gx, kvs,  Musazadeh}. 

The concept of a generalization of frames to a family indexed by some locally compact space endowed with a Radon measure was proposed by Kaisar \cite{KAI} and independently by Ali, Antoine and Gazeau \cite{GAZ}. In this paper we try to give a generalization of the results given in \cite{Z} moving from the discrete case to the continuous case.

  For more information on frame theory and its applications, we refer the readers to \cite{FR1, 5, r6, r1, RFDCA}.

	Throughout this paper, assume that $(\mathfrak{A},\mu)$ is a measure space with positive measure $\mu$, $\mathcal{H}$ and $ \mathcal{H}^{N} $ are used for showing a Hilbert space and a finite-dimensional Hilbert space, respectively,  $ GL(H) $ denotes the set of all bounded linear operators with a bounded inverse, and  $ GL^{+}(H)$ is the set of positive operators in $ GL(H) $.

	\begin{definition} \cite{5} 
		Let $\mathcal{H}^{N} $ be an $ N$-dimentional Hilbert space, and $(\mathfrak{A},\mu)$ be a measure space. Then 
		a map $ F : \mathfrak{A} \rightarrow \mathcal{H}^{N} $ is  called an integral frame in $ \mathcal{H}^{N}  $ if    there exist $ 0<A\leq B < \infty $ such that
					\begin{equation}\label{1}
				A\|f\|^{2}\leq \int_{\mathfrak{A}} \langle f, F_{\varsigma}\rangle \langle F_{\varsigma},f \rangle d\mu(\varsigma)\leq B\|f\|^{2} \quad \forall f\in \mathcal{H}^{N} .
			\end{equation}
	The elements $ A $ and $ B $ are called the integral frame bounds.
	If $ A = B $, we call this an integral tight frame.
	If $ A = B = 1 $, it is called an integral Parseval frame.
	If only the right hand inequality of (\ref{1}) is satisfied, we call $  F $ a controlled
	integral Bessel map with bound $ B $.
	\end{definition}

If $ F $ is a Bessel map, then  $ T_{F}: L^{2}(\mathfrak{A},\mu) \rightarrow \mathcal{H}^{N}$, defined by 
$ T_{F}(f)= \int_{\mathfrak{A}} \langle f, F(\varsigma) \rangle F(\varsigma) d\mu(\varsigma)$,
is a bounded linear operator. 
$ T_{F} $ is surjective and bounded if and only if $ F $ is an integral frame.
 This operator is called the synthesis operator.
 
 The adjoint of $ T_{F} $, which  is called the analysis operator, is defined by 
 \begin{equation*}
 	T^{\ast}_{F}:\mathcal{H}^{N} \rightarrow L^{2}(\mathfrak{A}, \mu), \;\;\;\;T^{\ast}_{F}(f)(\varsigma)=\langle f,F(\varsigma)\rangle , \; \varsigma\in \mathfrak{A}.
 \end{equation*}

The continuous frame operator is defined to be $ S_{F}= T_{F}T^{\ast}_{F} $, it is invertible and positive. 

Recall that a Bessel map $ F $ is a frame if and only if there exists a continuous Bessel  mapping  $ G $ is a dual of F if for any $ f,g\in \mathcal{H}^{N} $ 
\begin{equation*}
	\langle f,g\rangle= \int_{\mathfrak{A}} \langle f,G(\varsigma)\rangle \langle g,F(\varsigma)\rangle d\mu(\varsigma), \;f,g\in \mathcal{H}^{N},
\end{equation*}

$ G $ is called a dual frame for $ F $ and $ S^{-1} _{F}F$ is a dual of $ F $.

\section{Main results}

We consider some properties of controlled continuous frames in finite Hilbert spaces.

	\begin{definition}  
	Let $\mathcal{H}^{N} $ be an $ N$-dimensional Hilbert space and $(\mathfrak{A},\mu)$ be a measure space. Then 
a  family $ \{F_{\varsigma}\}_{\varsigma\in \mathfrak{A}} $ is called  a $ V$-controlled integral frame for an invertible operator $ V $ on $ \mathcal{H}^{N} $ if there exist $ 0<A\leq B < \infty $ such that
	\begin{equation}\label{d1eq1}
		A\|f\|^{2}\leq \int_{\mathfrak{A}} \langle f, F_{\varsigma}\rangle \langle VF_{\varsigma},f \rangle d\mu(\varsigma)\leq B\|f\|^{2} \quad \forall f\in \mathcal{H}^{N}.
	\end{equation}
	The elements $ A $ and $ B $ are called the $ V$-controlled integral frame bounds.
	If $ A = B $, we call this a $ V $-controlled integral tight frame.
	If $ A = B = 1 $, it is  called a $ V $-controlled integral Parseval frame.
	If only the right hand inequality of (\ref{d1eq1}) is satisfied, we call $  F $ a $ V $-controlled
	integral Bessel map with bound $ B $.
	
	Similar to ordinary frames, the controlled integral frame operator is defined for a controlled frame on $ \mathcal{H}^{N} $ by $ S_{VF}f= \int_{\mathfrak{A}}\langle f,F_{\varsigma}\rangle VF_{\varsigma} d\mu(\varsigma) $.Wihch assumed in weak sense.
	
	The controlled synthesis operator $ T^{\ast}_{VF}: L^{2}(\mathfrak{A},\mu) \rightarrow \mathcal{H}^{N} $  is defined by 	
	$T T^{\ast}_{VF}(f)= \int_{\mathfrak{A}} \langle f, F(\varsigma) \rangle VF(\varsigma) d\mu(\varsigma) $ and $ S_{UF}= T^{\ast}_{VF}T_{F} $,  where $ T_{F} $ is the analysis operator of $ \{F_{\varsigma}\}_{\varsigma} $.
\end{definition}

\begin{proposition}\label{p0}
Let $ F: \mathfrak{A} \rightarrow \mathcal{H}^{N} $ such that $ \int_{\mathfrak{A}}\Vert VF((\varsigma)) \Vert^{2} d\mu(\varsigma) < \infty$. Then $V F $ is a Bessel map.
\end{proposition}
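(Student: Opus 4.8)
The plan is to establish the Bessel (upper) inequality directly from the hypothesis, with no need to produce any lower bound. Recall that calling $VF$ a Bessel map amounts to exhibiting a constant $B$ with
\[
\int_{\mathfrak{A}} |\langle f, VF_\varsigma\rangle|^2 \, d\mu(\varsigma) \le B\|f\|^2 \qquad \forall f \in \mathcal{H}^{N},
\]
so the entire task reduces to bounding this single integral.

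First I would fix $f \in \mathcal{H}^{N}$ and estimate the integrand pointwise by the Cauchy--Schwarz inequality in $\mathcal{H}^{N}$:
\[
|\langle f, VF_\varsigma\rangle|^2 \le \|f\|^2 \, \|VF_\varsigma\|^2, \qquad \varsigma \in \mathfrak{A}.
\]
Integrating this estimate over $\mathfrak{A}$ against $\mu$ and factoring out the constant $\|f\|^2$ then gives
\[
\int_{\mathfrak{A}} |\langle f, VF_\varsigma\rangle|^2 \, d\mu(\varsigma) \le \|f\|^2 \int_{\mathfrak{A}} \|VF_\varsigma\|^2 \, d\mu(\varsigma).
\]
By hypothesis the last integral is finite, so setting $B := \int_{\mathfrak{A}} \|VF_\varsigma\|^2 \, d\mu(\varsigma) < \infty$ furnishes the required Bessel bound and finishes the argument.

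The one point that deserves a word is well-definedness: one must know that $\varsigma \mapsto \langle f, VF_\varsigma\rangle$ is $\mu$-measurable, so that the integrals above make sense. This is immediate from the (implicitly assumed) weak measurability of $F$ together with the boundedness of $V$, since $\langle f, VF_\varsigma\rangle = \langle V^{\ast} f, F_\varsigma\rangle$. Beyond this routine bookkeeping I expect no genuine obstacle: the proposition is essentially a one-line consequence of Cauchy--Schwarz, and note that the finite-dimensionality of $\mathcal{H}^{N}$ plays no role here, so the same estimate would hold verbatim in a general Hilbert space.
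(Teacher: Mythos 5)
Your proof is correct and coincides with the paper's own argument: both apply the Cauchy--Schwarz inequality pointwise, integrate over $\mathfrak{A}$, and take $B = \int_{\mathfrak{A}} \Vert VF(\varsigma)\Vert^{2}\, d\mu(\varsigma)$ as the Bessel bound. Your added remarks on weak measurability and the irrelevance of finite-dimensionality are sound but go beyond what the paper records.
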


\begin{proof}
	Using Cauchy-Schwarz inequality, we have
	\begin{equation*}
		\int_{\mathfrak{A}} \vert \langle f,VF(\varsigma)\rangle \vert^{2} d\mu(\varsigma)\leq \int_{\mathfrak{A}}\Vert f\Vert ^{2} \Vert VF(\varsigma) \Vert^{2}d\mu(\varsigma) \leq B \Vert f\Vert^{2}\; with\; B= \int_{\mathfrak{A}}\Vert VF((\varsigma)) \Vert^{2} d\mu(\varsigma).
	\end{equation*}
This completes the proof.	
\end{proof}

We  prove that the converse of Proposition \ref{p0} holds if $ \mathcal{H}^{N} $ is finite dimensional.

\begin{proposition} Let $\mathcal{H}^{N}$ be an $ N$-dimensional Hilbert space and $ F: \mathfrak{A} \rightarrow \mathcal{H}^{N} $ be a Bessel map. Then $ \int_{\mathfrak{A}}\Vert VF((\varsigma)) \Vert^{2} d\mu(\varsigma) < \infty $.
\end{proposition}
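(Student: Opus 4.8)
The plan is to exploit the finite dimensionality of $\mathcal{H}^N$ so that the Bessel bound, which controls individual inner products, can be summed to control the full norm $\Vert VF(\varsigma)\Vert^2$. I read the hypothesis as asserting that the family $\{VF(\varsigma)\}_{\varsigma\in\mathfrak{A}}$ satisfies the Bessel inequality $\int_{\mathfrak{A}} |\langle f, VF(\varsigma)\rangle|^2 \, d\mu(\varsigma) \leq B\Vert f\Vert^2$ for all $f\in\mathcal{H}^N$, this being the conclusion of Proposition \ref{p0} whose converse we now establish.

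First I would fix an orthonormal basis $\{e_i\}_{i=1}^N$ of $\mathcal{H}^N$. For each fixed $\varsigma$, Parseval's identity in the finite-dimensional space yields
\begin{equation*}
\Vert VF(\varsigma)\Vert^2 = \sum_{i=1}^N |\langle VF(\varsigma), e_i\rangle|^2 = \sum_{i=1}^N |\langle e_i, VF(\varsigma)\rangle|^2.
\end{equation*}
Integrating over $\mathfrak{A}$ and interchanging the finite sum with the integral --- justified simply by linearity, since there are only $N$ nonnegative terms --- I obtain
\begin{equation*}
\int_{\mathfrak{A}} \Vert VF(\varsigma)\Vert^2 \, d\mu(\varsigma) = \sum_{i=1}^N \int_{\mathfrak{A}} |\langle e_i, VF(\varsigma)\rangle|^2 \, d\mu(\varsigma).
\end{equation*}

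Next I would apply the Bessel hypothesis to each basis vector $e_i$, giving $\int_{\mathfrak{A}} |\langle e_i, VF(\varsigma)\rangle|^2 \, d\mu(\varsigma) \leq B\Vert e_i\Vert^2 = B$. Summing over $i = 1, \ldots, N$ then produces $\int_{\mathfrak{A}} \Vert VF(\varsigma)\Vert^2 \, d\mu(\varsigma) \leq NB < \infty$, which is exactly the claim.

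I do not expect any serious obstacle: the argument is routine once the right expansion is chosen. The only essential use of the hypotheses is the reduction of the norm to a finite sum via Parseval, which is precisely where finite dimensionality enters. In an infinite-dimensional setting the corresponding series would contribute a bound of the form $\sum_i B$, which diverges, so the conclusion is genuinely special to $\dim\mathcal{H}^N < \infty$; this explains why the authors restrict attention to $\mathcal{H}^N$ here.
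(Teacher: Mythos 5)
Your proof is correct and takes essentially the same route as the paper's: both expand $\Vert VF(\varsigma)\Vert^{2}$ via Parseval over an orthonormal basis of $\mathcal{H}^{N}$, interchange the finite sum with the integral, apply the Bessel bound $B\Vert e_{i}\Vert^{2}$ to each basis vector, and conclude with the bound $NB$. Your explicit reading of the (loosely stated) hypothesis as Besselness of the family $\{VF(\varsigma)\}_{\varsigma\in\mathfrak{A}}$ is precisely what the paper's proof uses implicitly, so there is no discrepancy.
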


\begin{proof}Let $ \{e_{k}\}_{k\in\{1,2,...,n\}} $ be an orthonormal basis for $ \mathcal{H}^{N} $. 
Then  we  have $ \Vert VF(\varsigma) \Vert ^{2} =\sum_{k=1}^{ n}\vert \langle VF(\varsigma) , e_{k}\rangle \vert^{2}$.
So 
	\begin{align*}
		\int_{\mathfrak{A}}\Vert VF(\varsigma) \Vert ^{2} d\mu(\varsigma)&=\sum_{k=1}^{ n}\int_{\mathfrak{A}}\vert \langle VF(\varsigma) , e_{k}\rangle \vert^{2}d\mu(\varsigma)\\
		&\leq \sum_{k=1}^{ n} B \Vert e_{k} \Vert ^{2}= Bn<\infty.
	\end{align*}
This completes the proof.
\end{proof}

We give a new identity for controlled integral frames in finite dimensional Hilbert spaces.

\begin{proposition} \label{p1}Let $ \{F_{\varsigma}\}_{\varsigma\in \mathfrak{A}} $  be a $ V $-controlled integral frame where $ V $ is an invertible operator on $ \mathcal{H}^{N} $. Then the following statements are equivalent.
\begin{itemize}
	\item [(1)] $ \{F_{\varsigma}\}_{\varsigma \in \mathcal{A}} $ is a $ V- $controlled integral frame with bounds $ A $ and $ B $.
	\item[(2)] 	
$S_{VF}(f)= \int_{\mathfrak{A}}\langle f,F_{\varsigma}\rangle VF_{\varsigma} d\mu(\varsigma)$  
is an invertible and positive operator on $ \mathcal{H}^{N} $.
\end{itemize}	
\end{proposition}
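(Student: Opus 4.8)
The plan is to route the entire argument through the quadratic form of $S_{VF}$. The crucial observation is that pulling $f$ inside the integral defining $S_{VF}$ yields the identity
\[
\langle S_{VF}f, f\rangle = \int_{\mathfrak{A}} \langle f, F_{\varsigma}\rangle\langle VF_{\varsigma}, f\rangle\, d\mu(\varsigma),
\]
valid for every $f \in \mathcal{H}^{N}$ by linearity and continuity of the inner product. This single identity converts the frame bounds in \eqref{d1eq1} into the operator-level estimate $A\|f\|^{2} \le \langle S_{VF}f, f\rangle \le B\|f\|^{2}$, and the whole proposition becomes a translation between these two formulations.

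For the implication $(1)\Rightarrow(2)$, I would first note that the frame inequality forces $\langle S_{VF}f, f\rangle$ to be real for every $f$; in the complex space $\mathcal{H}^{N}$ this already guarantees that $S_{VF}$ is self-adjoint. Positivity is then immediate from the lower bound, since $\langle S_{VF}f, f\rangle \ge A\|f\|^{2} \ge 0$. For invertibility I would use that the lower bound makes $S_{VF}$ bounded below: by Cauchy--Schwarz $\|S_{VF}f\|\,\|f\| \ge \langle S_{VF}f, f\rangle \ge A\|f\|^{2}$, so $\|S_{VF}f\| \ge A\|f\|$ and $S_{VF}$ is injective. Because $\mathcal{H}^{N}$ is finite dimensional, an injective operator is automatically surjective, hence $S_{VF}$ is invertible.

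For the converse $(2)\Rightarrow(1)$, I would invoke the spectral theorem for the self-adjoint positive operator $S_{VF}$ on the finite-dimensional space $\mathcal{H}^{N}$. Its eigenvalues satisfy $0 < \lambda_{\min} \le \lambda_{\max} < \infty$, the strict positivity of $\lambda_{\min}$ coming from invertibility, and the min--max characterization gives $\lambda_{\min}\|f\|^{2} \le \langle S_{VF}f, f\rangle \le \lambda_{\max}\|f\|^{2}$. Taking $A = \lambda_{\min}$ and $B = \lambda_{\max}$ and substituting the identity above recovers \eqref{d1eq1}, so $\{F_{\varsigma}\}_{\varsigma\in\mathfrak{A}}$ is a $V$-controlled integral frame with those bounds.

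The step I expect to be the main obstacle is justifying that $S_{VF}$ is genuinely self-adjoint, so that the word \emph{positive} carries its spectral meaning. Since $V$ is only assumed invertible rather than positive, the factorization $S_{VF} = V S_{F}$ need not be symmetric a priori. The resolution is precisely the real-valuedness of the quadratic form noted above, which in a complex Hilbert space is exactly the criterion for self-adjointness; I would make this deduction explicit rather than let it hide inside the term \emph{positive}, since it is the one point where the finite-dimensional complex structure is doing genuine work.
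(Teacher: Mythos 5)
Your proposal is correct and follows essentially the same route as the paper: both arguments hinge on the quadratic-form identity $\langle S_{VF}f,f\rangle=\int_{\mathfrak{A}}\langle f,F_{\varsigma}\rangle\langle VF_{\varsigma},f\rangle\,d\mu(\varsigma)$, with the frame bounds translated into spectral bounds for $S_{VF}$ (the paper via $\Vert S_{VF}^{1/2}f\Vert^{2}$ and unexplained constants $m,m'$, you via $\lambda_{\min}$ and $\lambda_{\max}$, which is the same mechanism). You are in fact more careful than the paper at its two weak points: the paper dismisses $(1)\Rightarrow(2)$ as immediate, where you supply the bounded-below-implies-invertible argument, and it silently takes the square root $S_{VF}^{1/2}$, where you justify self-adjointness from the real-valuedness of the quadratic form in the complex setting.
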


\begin{proof}$ (1) \Rightarrow (2)$ is immediately from the definition of $ V- $ controlled integral frame operator.
	
 $ (2) \Rightarrow (1)$  for any $ f\in \mathcal{H}^{N} $, suppose that $ S_{VF} $ is positive and invertible.

 Then 
 \begin{equation*}
 	\langle S_{VF}f,f\rangle = \langle \int_{\mathfrak{A}}\langle f,F_{\varsigma}\rangle  VF_{\varsigma}f,f  d\mu(\varsigma)\rangle= \int_{\mathfrak{A}}\langle f,F_{\varsigma}\rangle \langle VF_{\varsigma}f,f \rangle d\mu(\varsigma).
 \end{equation*}
 This implies that
 \begin{equation*}
  \Vert\int_{\mathfrak{A}}\langle f,F_{\varsigma}\rangle \langle VF_{\varsigma}f,f \rangle d\mu(\varsigma)\Vert= \Vert\langle S_{VF}f,f\rangle\Vert= \Vert S_{VF}^{\frac{1}{2}}f\Vert^{2} ,
 \end{equation*} 
 there exists $ 0<m $ such that 
 \begin{equation}\label{2}
 	m\langle f,f\rangle  \leq \langle S_{VF}f,f\rangle .
 \end{equation}
On other hand, for all $ f\in \mathcal{H} $, there exists $ 0<m' $ such that 

\begin{equation}\label{1}
\langle S_{VF}f,f\rangle \leq m' \langle f,f\rangle
\end{equation}

 From \ref{2} and \ref{1}, we conclude that  $ \{F_{\varsigma}\}_{\varsigma \in \mathcal{A}} $ is a $ V- $controlled integral frame
\end{proof}

\begin{theorem} \label{t3.4} Let  $ \{F_{\varsigma}\}_{\varsigma\in \mathfrak{A}} $ be a  continuous frame with the frame operator $ S_{F} $. If $ V \in GL^{+}(\mathcal{H}^{N})$ is self-adjoint operator  with $V S_{F}=S_{F}V $, then $ \{F_{\varsigma}\}_{\varsigma\in \mathfrak{A}} $ is a $ V-$controlled integral frame.
\end{theorem}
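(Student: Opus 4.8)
The plan is to reduce the $V$-controlled inequality \eqref{d1eq1} to a statement about the single operator $VS_F$ and then read off the bounds from its spectrum. First I would rewrite the central integral: since $V$ is self-adjoint, $\langle VF_\varsigma, f\rangle = \langle F_\varsigma, Vf\rangle$, so that
\begin{equation*}
\int_{\mathfrak{A}} \langle f, F_\varsigma\rangle \langle VF_\varsigma, f\rangle \, d\mu(\varsigma) = \int_{\mathfrak{A}} \langle f, F_\varsigma\rangle \langle F_\varsigma, Vf\rangle \, d\mu(\varsigma) = \langle S_F f, Vf\rangle = \langle VS_F f, f\rangle,
\end{equation*}
where the first integral identity is the (weak) definition of the frame operator $S_F$ applied to the vector $Vf$, and the last equality again uses $V = V^{\ast}$. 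Thus the whole theorem amounts to producing constants $0 < A \le B < \infty$ with $A\|f\|^2 \le \langle VS_F f, f\rangle \le B\|f\|^2$ for all $f \in \mathcal{H}^{N}$.

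The key step is to show that $VS_F$ is positive and invertible. It is self-adjoint, because $(VS_F)^{\ast} = S_F^{\ast} V^{\ast} = S_F V = VS_F$ by the hypotheses $V = V^{\ast}$, $S_F = S_F^{\ast}$, and $VS_F = S_F V$. For positivity I would pass to the square root: since $V$ commutes with $S_F$, its positive square root $V^{1/2}$ (a norm limit of polynomials in $V$) also commutes with $S_F$, whence
\begin{equation*}
VS_F = V^{1/2} S_F V^{1/2}.
\end{equation*}
This factorization makes the operator manifestly positive, since $\langle V^{1/2} S_F V^{1/2} f, f\rangle = \langle S_F V^{1/2} f, V^{1/2} f\rangle \ge 0$ by positivity of $S_F$; invertibility follows at once because $V, S_F \in GL^{+}(\mathcal{H}^{N})$.

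Finally I would extract explicit bounds. Applying the frame inequalities for $\{F_\varsigma\}_{\varsigma\in\mathfrak{A}}$ with bounds $A_0, B_0$ to the vector $V^{1/2} f$ gives
\begin{equation*}
A_0 \|V^{1/2} f\|^2 \le \langle VS_F f, f\rangle \le B_0 \|V^{1/2} f\|^2,
\end{equation*}
and since $\|V^{1/2} f\|^2 = \langle Vf, f\rangle$ satisfies $\|V^{-1}\|^{-1}\|f\|^2 \le \langle Vf, f\rangle \le \|V\|\,\|f\|^2$ for the positive invertible operator $V$, the choice $A = A_0\|V^{-1}\|^{-1}$ and $B = B_0\|V\|$ works. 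In finite dimension one could equally take $A$ and $B$ to be the least and greatest eigenvalues of the positive invertible operator $VS_F$.

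The main obstacle is precisely the positivity of $VS_F$: without the commutativity assumption $VS_F = S_F V$, the product of two positive operators is in general neither self-adjoint nor positive, so the factorization $VS_F = V^{1/2} S_F V^{1/2}$—and with it the lower frame bound $A$—would collapse. Everything else reduces to a routine application of the frame inequality together with elementary norm bounds for the positive operator $V$.
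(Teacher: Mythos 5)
Your proof is correct, and it follows the same basic strategy as the paper --- reduce everything to spectral properties of the single operator $VS_F$ --- but executes it in a genuinely more complete and self-contained way. The paper's own proof sets $S_{VF}=VS_F$ and appeals to Proposition~\ref{p1} (a family is a $V$-controlled integral frame iff $S_{VF}$ is positive and invertible); however, on the way it simply \emph{asserts} the display $A\Vert f\Vert^{2}\leq \langle S_{VF}f,f\rangle$, which is exactly the nontrivial point at issue, and it never actually proves positivity of the product $VS_F$ (the product of two positive operators is not positive in general). You supply precisely the missing ingredient: since $S_F$ commutes with $V$, it commutes with $V^{1/2}$ (a limit of polynomials in $V$), so $VS_F=V^{1/2}S_FV^{1/2}$, which is manifestly self-adjoint, positive, and invertible; then, applying the ordinary frame inequality at the vector $V^{1/2}f$ and using $\Vert V^{-1}\Vert^{-1}\Vert f\Vert^{2}\leq \langle Vf,f\rangle\leq \Vert V\Vert\,\Vert f\Vert^{2}$, you obtain the explicit bounds $A=A_{0}\Vert V^{-1}\Vert^{-1}$ and $B=B_{0}\Vert V\Vert$ directly, bypassing Proposition~\ref{p1} altogether. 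What each approach buys: the paper's route is shorter \emph{modulo} Proposition~\ref{p1}, but that proposition's implication from positivity-and-invertibility to frame bounds is itself only sketched there (the constants $m,m'$ are asserted, implicitly using finite dimensionality), whereas your argument is quantitative, closes the logical gap in the positivity step, and would go through verbatim in an infinite-dimensional Hilbert space. Your opening reduction, $\int_{\mathfrak{A}}\langle f,F_{\varsigma}\rangle\langle VF_{\varsigma},f\rangle\,d\mu(\varsigma)=\langle VS_Ff,f\rangle$ via $\langle VF_{\varsigma},f\rangle=\langle F_{\varsigma},Vf\rangle$, is also the clean way to make rigorous sense of the weak-integral expression that the paper writes somewhat loosely.
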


\begin{proof}For $ f\in \mathcal{H} $, we have 
	\begin{equation*}
		\langle S_{V}f,f\rangle =\langle \int_{\mathfrak{A}}\langle f,F_{\varsigma}\rangle  VF_{\varsigma}f,f  d\mu(\varsigma)\rangle= \int_{\mathfrak{A}}\langle f,F_{\varsigma}\rangle \langle VF_{\varsigma}f,f \rangle d\mu(\varsigma).
	\end{equation*} 
So, we have 
\begin{equation*}
	A\Vert f\Vert^{2}\leq \langle S_{V}f,f\rangle.
\end{equation*}
Then, The operator $ S_{VF} $ is positive, also it's selfadjoint.
	Let $ S_{VF}=VS_{F} $. The operator $ S_{VF} $ is invertible. By Proposition \ref{p1}, 
$ \{F_{\varsigma} \}_{\varsigma}$ is a $ V- $controlled integral frame. 	
\end{proof} 

\begin{proposition}Let  $ \{F_{\varsigma}\}_{\varsigma\in \mathfrak{A}} $ be a $ V-$controlled integral frame for $ \mathcal{H} $ and $ V \in GL(\mathcal{H})$. Then  $ \{F_{\varsigma}\}_{\varsigma} $ is a continuous frame and  $V S_{F}=S_{F}V $, with 
	\begin{equation*}
	  \int_{\mathfrak{A}}\langle f,F_{\varsigma}\rangle  VF_{\varsigma} d\mu(\varsigma) =\int_{\mathfrak{A}}\langle f,VF_{\varsigma}\rangle  F_{\varsigma} d\mu(\varsigma) .  
	\end{equation*}	
\end{proposition}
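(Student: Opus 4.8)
The plan is to reduce every claim to an operator identity for the ordinary frame operator $S_F f = \int_{\mathfrak{A}}\langle f, F_\varsigma\rangle F_\varsigma\, d\mu(\varsigma)$ and its controlled counterpart $S_{VF}$, exploiting that $V$ is a fixed bounded operator that may be pulled through the (weak) integral. First I would record the factorization $S_{VF} = V S_F$: since $V$ is linear and bounded,
\[
S_{VF}f = \int_{\mathfrak{A}}\langle f, F_\varsigma\rangle\, V F_\varsigma\, d\mu(\varsigma) = V\int_{\mathfrak{A}}\langle f, F_\varsigma\rangle\, F_\varsigma\, d\mu(\varsigma) = V S_F f .
\]
The defining inequality \eqref{d1eq1} reads $A\|f\|^{2} \le \langle S_{VF} f, f\rangle \le B\|f\|^{2}$, so the quadratic form $f \mapsto \langle S_{VF} f, f\rangle$ is real-valued; on a complex Hilbert space this forces $S_{VF}$ to be self-adjoint, while the two-sided bound with $A>0$ makes it positive and bounded below, hence invertible.

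Next I would deduce the frame property. Because $V \in GL(\mathcal{H})$, the factorization yields $S_F = V^{-1} S_{VF}$, which is therefore bounded (so $\{F_\varsigma\}_{\varsigma}$ is a Bessel map) and invertible. Moreover $\langle S_F f, f\rangle = \int_{\mathfrak{A}} |\langle f, F_\varsigma\rangle|^{2}\, d\mu(\varsigma) \ge 0$, so $S_F$ is positive; a positive invertible operator is bounded below, which is exactly the statement that $\{F_\varsigma\}_{\varsigma}$ is a continuous frame with frame operator $S_F$.

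Finally, the displayed identity follows by taking adjoints in the factorization. Using $S_F^{*}=S_F$ and the self-adjointness of $S_{VF}$ obtained above,
\[
V S_F = S_{VF} = S_{VF}^{*} = (V S_F)^{*} = S_F V^{*},
\]
and unwinding this into integrals through $\langle f, V F_\varsigma\rangle = \langle V^{*} f, F_\varsigma\rangle$ gives precisely $\int_{\mathfrak{A}}\langle f, F_\varsigma\rangle V F_\varsigma\, d\mu(\varsigma) = \int_{\mathfrak{A}}\langle f, V F_\varsigma\rangle F_\varsigma\, d\mu(\varsigma)$.

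I expect the main obstacle to be the commutation relation \emph{exactly as stated}: the computation only produces $V S_F = S_F V^{*}$, so to reach $V S_F = S_F V$ one needs $V^{*}=V$ (then, since $S_F$ is invertible, the two coincide). This matches the self-adjoint hypothesis used elsewhere, e.g. in Theorem \ref{t3.4}. I would therefore either add the standing assumption $V \in GL^{+}(\mathcal{H}^{N})$ self-adjoint, which upgrades $V S_F = S_F V^{*}$ to the claimed $V S_F = S_F V$, or state the robust, hypothesis-free conclusion as $V S_F = S_F V^{*}$ together with the displayed integral identity.
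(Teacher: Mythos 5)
Your proof is correct (modulo the weak-integral interchanges that the paper takes for granted throughout), but it is a genuinely different argument from the paper's --- and a more rigorous one. The paper's proof addresses \emph{only} the frame property: it estimates the quadratic form directly, writing $\langle S_{VF}f,f\rangle=\langle VS_{F}f,f\rangle=\langle V^{1/2}S_{F}f,V^{1/2}f\rangle$ and bounding on both sides to extract explicit frame bounds $A\Vert V^{1/2}\Vert^{-2}$ and $B\Vert V^{-1/2}\Vert^{2}$. That route tacitly assumes $V$ is positive (it manipulates $V^{1/2}$, $V^{-1/2}$ and $(VS_{F})^{1/2}$, although the statement only assumes $V\in GL(\mathcal{H})$), and even granting positivity, steps such as $\langle V^{1/2}S_{F}f,V^{1/2}f\rangle\leq\Vert V^{1/2}\Vert^{2}\langle S_{F}f,f\rangle$ and $\langle (V^{-1}VS_{F})^{1/2}f,(V^{-1}VS_{F})^{1/2}f\rangle\leq\Vert V^{-1/2}\Vert^{2}\langle (VS_{F})^{1/2}f,(VS_{F})^{1/2}f\rangle$ are not justified without commutation hypotheses; moreover the paper's proof never returns to the claimed commutation $VS_{F}=S_{F}V$ or to the displayed integral identity at all. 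Your route --- factor $S_{VF}=VS_{F}$, read self-adjointness of $S_{VF}$ off the reality of the quadratic form (valid on a \emph{complex} space), get invertibility from the two-sided bound, then transfer boundedness, positivity and invertibility to $S_{F}=V^{-1}S_{VF}$ --- avoids square roots of $V$ entirely, so it genuinely works under the stated hypothesis $V\in GL(\mathcal{H})$, at the cost of not producing the paper's explicit quantitative bounds. Your closing diagnosis is exactly right and is corroborated by the paper itself: the argument yields only $VS_{F}=S_{F}V^{\ast}$ (to which the displayed integral identity is equivalent, via $\langle f,VF_{\varsigma}\rangle=\langle V^{\ast}f,F_{\varsigma}\rangle$), and the example immediately following the proposition --- a $V$-controlled frame on $\mathbb{R}^{2}$ with $VS_{F}\neq S_{F}V$ --- confirms that the commutation as stated cannot be proved; note that on a real space even your self-adjointness step fails, which is precisely why that counterexample can exist. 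So your proposed repair (add $V=V^{\ast}$, as in the hypotheses of Theorem \ref{t3.4}, or state the conclusion as $VS_{F}=S_{F}V^{\ast}$) is the correct resolution, and it supplies precisely the part of the proposition that the paper's own proof silently omits.
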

\begin{proof}
	Let  $ \{F_{\varsigma}\}_{\varsigma \in \mathcal{A}} $ is a $ V- $controlled integral frame with bounds $ A $ and $ B $.
	
	We have 
	\begin{equation*}
		A\langle f,f\rangle \leq \langle S_{V}f,f\rangle = \langle VS f,f\rangle =\langle V^{\frac{1}{2}} Sf,V^{\frac{1}{2}}f\rangle\leq \Vert V^{\frac{1}{2}} \Vert^{2}\langle Sf,f\rangle.
	\end{equation*}
So, 
\begin{equation}\label{3}
A\Vert V^{\frac{1}{2}}\Vert^{-2}  \langle f,f\rangle  \leq	\int_{\mathfrak{A}}\langle f,F_{\varsigma}\rangle \langle F_{\varsigma},f\rangle d\mu(\varsigma).
\end{equation}
On other hand, for all $ f\in\mathcal{H} $ we have
\begin{align*}
	\int_{\mathfrak{A}}\langle f,F_{\varsigma}\rangle \langle F_{\varsigma},f\rangle d\mu(\varsigma)&= \langle Sf,f\rangle\\
&	= \langle V^{-1}V Sf,f\rangle \\
&	=\langle (V^{-1}VS)^{\frac{1}{2}}f,(V^{-1}VS)^{\frac{1}{2}}f\rangle \\
&	\leq \Vert V^{\frac{-1}{2}} \Vert^{2}\langle (VS)^{\frac{1}{2}}f,(VS)^{\frac{1}{2}}f\rangle \\
&=\Vert V^{\frac{-1}{2}} \Vert^{2}\langle (S_{V})^{\frac{1}{2}}f,(S_{V})^{\frac{1}{2}}f\rangle \\
&= \Vert V^{\frac{-1}{2}} \Vert^{2}\langle S_{V}f,f\rangle \\
&\leq \Vert V^{\frac{-1}{2}} \Vert^{2} B\langle f,f\rangle. 
\end{align*}
Then, 
\begin{equation}\label{4}
\int_{\mathfrak{A}}\langle f,F_{\varsigma}\rangle \langle F_{\varsigma},f\rangle d\mu(\varsigma)\leq \Vert V^{\frac{-1}{2}} \Vert^{2} B\langle f,f\rangle.
\end{equation} 
From \ref{3} and \ref{4} we conclude that $  \{F_{\varsigma}\}$ is a continuous frame.
\end{proof}

We show that the condition $ VS_{F}= S_{F}V $ is not given in a finite-dimensional real Hilbert space in the following example. 

\begin{example}
 Consider the frame $ \{F_{\varsigma}\}_{\varsigma} = \{\binom{1}{\varsigma}\}$ for $ \mathbb{R}^{2}$ and  
$\mathfrak{A} = [0,1] $ endewed with the Lebesgue measure.	
With the operator $ V= 	 \begin{pmatrix}
	\begin{array}{cc}
		1 & 1 \\ 
	-1 & 1
	\end{array} 
\end{pmatrix}, $ 
it is clear that $ V $ is positive and invertible. By definition of frame operator, we have $ S_{F}= 	 \begin{pmatrix}
	\begin{array}{cc}
		1 & 1/2 \\ 
		-1/2 & 1/3
	\end{array} 
\end{pmatrix}.$

For  all $  f =\binom{x}{y} \in \mathbb{R}^{2}$, we have 
\begin{align*}
	\int_{\mathfrak{A}} \langle f, F_{\varsigma}\rangle \langle VF_{\varsigma},f \rangle d\mu(\varsigma)&=\langle S_{VF}f,f\rangle\\
	&=\langle VS_{F}\binom{x}{y},\binom{x}{y}\rangle\\
	&= \dfrac{1}{2}x^{2}- \dfrac{y^{2}}{2}-\dfrac{2}{3}xy.
\end{align*}
	
We obtain that the frame $ \{F_{\varsigma}\} $ is a $ V-$controlled integral frame such that $ VS_{F}\not= S_{F}V $.
\end{example}

\begin{proposition}\label{p9}
If $ \{F_{\varsigma}\}_{\varsigma\in \mathfrak{A}} $ is  a $ V-$controlled integral frame for $ \mathcal{H}^{N} $  with the frame operator $ S_{VF} $, then   $ \{F_{\varsigma}\}_{\varsigma\in \mathfrak{A}} $ is a continuous frame for $ \mathcal{H} $  with the frame operator $ V^{-1}S_{VF} $.
\end{proposition}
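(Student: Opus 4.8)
The plan is to first identify the ordinary continuous frame operator with $V^{-1}S_{VF}$, and then to recover the two frame bounds for $\{F_\varsigma\}_{\varsigma\in\mathfrak{A}}$ from the hypotheses on $S_{VF}$. Writing $S_F f=\int_{\mathfrak{A}}\langle f,F_\varsigma\rangle F_\varsigma\,d\mu(\varsigma)$ for the candidate frame operator, the first step is to pull the bounded operator $V$ out of the integral: since $V$ is linear and continuous, in the weak sense
\begin{equation*}
S_{VF}f=\int_{\mathfrak{A}}\langle f,F_\varsigma\rangle VF_\varsigma\,d\mu(\varsigma)=V\int_{\mathfrak{A}}\langle f,F_\varsigma\rangle F_\varsigma\,d\mu(\varsigma)=VS_F f .
\end{equation*}
Applying $V^{-1}$ gives $S_F=V^{-1}S_{VF}$, which is exactly the operator claimed in the statement; this also shows $S_F$ is well defined and bounded, and a direct computation shows it is self-adjoint and positive, so that $S_F^{1/2}$ exists. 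By Proposition \ref{p1} the operator $S_{VF}$ is positive and invertible and the defining inequalities read $A\|f\|^2\le\langle S_{VF}f,f\rangle\le B\|f\|^2$, whence $\|S_{VF}\|=\sup_{\|f\|=1}\langle S_{VF}f,f\rangle\le B$.

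For the Bessel (upper) bound on $\{F_\varsigma\}_{\varsigma\in\mathfrak{A}}$ I would simply use the operator identity and the norm estimate just recorded:
\begin{equation*}
\int_{\mathfrak{A}}|\langle f,F_\varsigma\rangle|^2\,d\mu(\varsigma)=\langle S_F f,f\rangle=\langle V^{-1}S_{VF}f,f\rangle\le\|V^{-1}\|\,\|S_{VF}f\|\,\|f\|\le\|V^{-1}\|\,B\,\|f\|^2 ,
\end{equation*}
which is routine and also yields $\|S_F\|\le\|V^{-1}\|B$.

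The main obstacle is the lower bound, since $VS_F$ is not self-adjoint in general and one cannot simply invert $V$ inside the quadratic form $\langle S_{VF}f,f\rangle$. The device I would use is the Cauchy–Schwarz inequality for the positive form $\langle S_F\,\cdot\,,\cdot\rangle$: using $S_{VF}=VS_F$, the self-adjointness of $S_F^{1/2}$, and $\langle S_{VF}f,f\rangle\ge A\|f\|^2>0$,
\begin{equation*}
A\|f\|^2\le\langle S_{VF}f,f\rangle=|\langle S_F^{1/2}f,\,S_F^{1/2}V^{*}f\rangle|\le\langle S_F f,f\rangle^{1/2}\,\langle S_F V^{*}f,V^{*}f\rangle^{1/2}\le\langle S_F f,f\rangle^{1/2}\,\|S_F\|^{1/2}\,\|V\|\,\|f\| .
\end{equation*}
Solving this for $\langle S_F f,f\rangle$ and inserting $\|S_F\|\le\|V^{-1}\|B$ gives the strictly positive lower bound
\begin{equation*}
\int_{\mathfrak{A}}|\langle f,F_\varsigma\rangle|^2\,d\mu(\varsigma)=\langle S_F f,f\rangle\ge\frac{A^2}{\|V\|^2\,\|V^{-1}\|\,B}\,\|f\|^2 .
\end{equation*}
Combining the two estimates shows that $\{F_\varsigma\}_{\varsigma\in\mathfrak{A}}$ is a continuous frame, and the identity $S_F=V^{-1}S_{VF}$ from the first step identifies its frame operator, completing the proof. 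I expect the positivity and self-adjointness of $S_F$, together with the boundedness of $V$ and $V^{-1}$, to be the only facts needed beyond Proposition \ref{p1}.
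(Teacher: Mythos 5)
Your proof is correct, but it takes a genuinely different route from the paper's. The paper argues qualitatively and leans on finite dimensionality: it writes $S_{VF}=VS_F$, concludes that $S_F=V^{-1}S_{VF}$ is injective and hence invertible on $\mathcal{H}^{N}$, and then exhibits the reconstruction formula $f=\int_{\mathfrak{A}}\langle f,(S_F^{-1})^{\ast}F_\varsigma\rangle F_\varsigma\,d\mu(\varsigma)$, from which the frame property follows with the bounds left implicit (essentially the extreme eigenvalues of $S_F$). You instead extract explicit frame bounds: the Bessel bound $\|V^{-1}\|B$, and, via Cauchy--Schwarz for the positive form $\langle S_F\cdot,\cdot\rangle$ applied to $A\|f\|^{2}\le\langle S_F^{1/2}f,S_F^{1/2}V^{\ast}f\rangle$, the lower bound $A^{2}/(\|V\|^{2}\|V^{-1}\|B)$. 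This buys two things the paper's proof does not: quantitative constants, and independence from finite dimensionality, so your argument proves the statement for continuous frames in arbitrary Hilbert spaces, under the same weak-integrability conventions by which both you and the paper pull $V$ out of the integral to get $S_{VF}=VS_F$. Two caveats worth fixing in a final write-up. First, your identity $\|S_{VF}\|=\sup_{\|f\|=1}\langle S_{VF}f,f\rangle\le B$ requires $S_{VF}$ to be self-adjoint; on a complex Hilbert space this is automatic, since the frame inequalities force $\langle S_{VF}f,f\rangle$ to be real for all $f$, but on a real space it can fail (the paper's own $\mathbb{R}^{2}$ example has $VS_F$ non-symmetric), so either state the complex-scalar assumption or use the numerical-radius estimate $\|S_{VF}\|\le 2B$, which only changes your constants. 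Second, asserting that $S_F$ is well defined weakly presupposes that $\varsigma\mapsto\langle f,F_\varsigma\rangle\langle F_\varsigma,g\rangle$ is integrable for all $f,g$; this does follow from the controlled Bessel condition by writing $\langle F_\varsigma,g\rangle=\langle VF_\varsigma,(V^{\ast})^{-1}g\rangle$, but it deserves a line, since the controlled hypothesis a priori controls only the integrals involving $VF_\varsigma$ --- a point the paper glosses over exactly as you do.
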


\begin{proof}Let $ S_{F}f= \int_{\mathfrak{A}}  \langle f, F_{\varsigma}\rangle F_{\varsigma} d\mu(\varsigma), \forall f\in\mathcal{H}^{N} $ and 
	$ S_{VF}f= \int_{\mathfrak{A}}  \langle f, F_{\varsigma}\rangle VF_{\varsigma} d\mu(\varsigma) = VS_{F}f, \forall f\in\mathcal{H}^{N} $. Then $ V^{-1}S_{VF}f= S_{F}f $. The operator $ S_{F} $ 	is an  injective operator on finite dimensional Hilbert space and  $ S_{F} $  is invertible. Therefore, for $ f\in \mathcal{H}^{N}  $ we have
	\begin{equation*}
		f= \int_{\mathfrak{A}}  \langle S_{F}^{-1}f, F_{\varsigma}\rangle F_{\varsigma} d\mu(\varsigma)=\int_{\mathfrak{A}}  \langle f,(S_{F}^{-1})^{\ast} F_{\varsigma}\rangle F_{\varsigma} d\mu(\varsigma).
	\end{equation*}
This shows that $ \{F_{\varsigma}\}_{\varsigma\in \mathfrak{A}} $ is a continuous frame  generator for $ \mathcal{H}^{N} $
with the frame operator $ V^{-1}S_{VF} $ and $ \{F_{\varsigma}\}_{\varsigma\in \mathfrak{A}} $ is a generator for $ \mathcal{H}^{N} $.
\end{proof}

 
\begin{theorem}Let $ F=\{F_{\varsigma}\}_{\varsigma\in \mathfrak{A}} $ be a $ V-$controlled integral frame for $ \mathcal{H}^{N} $ where $ V $ is an invertible operator and the controlled frame operator $ S_{VF} $ be a normal operator with $ VS_{VF}= S_{VF}V $. Then $ V $ is a positive operator. 
\end{theorem}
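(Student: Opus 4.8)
The plan is to exploit the factorization $S_{VF}=VS_F$, where $S_F$ is the frame operator of the underlying continuous frame $\{F_\varsigma\}_{\varsigma\in\mathfrak{A}}$. Indeed $S_{VF}f=\int_{\mathfrak{A}}\langle f,F_\varsigma\rangle VF_\varsigma\,d\mu(\varsigma)=V\int_{\mathfrak{A}}\langle f,F_\varsigma\rangle F_\varsigma\,d\mu(\varsigma)=VS_Ff$, and by Proposition \ref{p9} the operator $S_F$ is self-adjoint, positive and invertible on $\mathcal{H}^N$. The whole argument then reduces to analyzing $V=S_{VF}S_F^{-1}$ through the structure of these two operators.

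First I would extract a commutation relation from the hypothesis $VS_{VF}=S_{VF}V$. Substituting $S_{VF}=VS_F$ yields $V^2S_F=VS_FV$, and since $V$ is invertible I may cancel it on the left to obtain $VS_F=S_FV$. Thus $V$ commutes with $S_F$, hence with $S_F^{-1}$ and with the positive square root $S_F^{1/2}$ (the latter being a norm-limit of polynomials in $S_F$ by the functional calculus). In particular $S_{VF}=VS_F$ commutes with $S_F^{-1}$, since $S_{VF}S_F^{-1}=V=S_F^{-1}VS_F=S_F^{-1}S_{VF}$.

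Next I would show that $S_{VF}$ is positive. For every $f\in\mathcal{H}^N$ the frame inequalities give
\begin{equation*}
A\|f\|^2\le \langle S_{VF}f,f\rangle=\int_{\mathfrak{A}}\langle f,F_\varsigma\rangle\langle VF_\varsigma,f\rangle\,d\mu(\varsigma)\le B\|f\|^2,
\end{equation*}
so $\langle S_{VF}f,f\rangle$ is real and bounded below by $A\|f\|^2>0$. Here the normality hypothesis does the decisive work: a normal operator whose numerical range is real has real spectrum and is therefore self-adjoint, and the lower bound $\langle S_{VF}f,f\rangle\ge A\|f\|^2$ upgrades this to $S_{VF}\ge AI>0$. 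I expect this step — converting ``real numerical range'' into genuine self-adjointness, and thence positivity, via normality — to be the main obstacle, because $S_{VF}=VS_F$ is a product of two positive operators and need not itself be self-adjoint without the extra assumption.

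Finally I would combine the two conclusions. Since $S_{VF}$ commutes with $S_F^{-1}$, so does $S_{VF}^{1/2}$, and therefore
\begin{equation*}
V=S_{VF}S_F^{-1}=S_{VF}^{1/2}S_F^{-1}S_{VF}^{1/2}.
\end{equation*}
As $S_F^{-1}\ge 0$, for every $f$ we obtain $\langle Vf,f\rangle=\langle S_F^{-1}S_{VF}^{1/2}f,\,S_{VF}^{1/2}f\rangle\ge 0$, while $V$ is self-adjoint, being the product of the two commuting self-adjoint operators $S_{VF}$ and $S_F^{-1}$. Hence $V$ is a positive operator, as claimed. (As a robustness check, one can reach $\langle Vf,f\rangle\ge 0$ without invoking normality: writing $S_{VF}=S_F^{1/2}VS_F^{1/2}$ and setting $g=S_F^{1/2}f$ gives $\langle Vg,g\rangle=\langle S_{VF}f,f\rangle\ge A\|f\|^2\ge 0$, so normality is only needed to secure the self-adjointness of $V$.)
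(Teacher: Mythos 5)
Your proof is correct, but it follows a genuinely different route from the paper's. The paper argues spectrally: from normality of $S_{VF}$ and the (implicitly used) commutation $VS_F=S_FV$, it extracts a common orthonormal eigenbasis $\{e_k\}$ of $S_{VF}$ and $S_F$ with eigenvalues $\alpha_k$ and $\beta_k$, computes $Ve_k=(S_{VF}S_F^{-1})e_k=\beta_k^{-1}\alpha_k e_k$, and concludes positivity of $V$ from the eigenvalue ratios; normality is used there precisely to guarantee simultaneous unitary diagonalization. You instead stay operator-algebraic: you explicitly derive $VS_F=S_FV$ by cancelling $V$ in $V^2S_F=VS_FV$ (a step the paper silently assumes when it writes $VS_FS_F=S_FVS_F$), use normality only to upgrade the real, bounded-below numerical range $\langle S_{VF}f,f\rangle\in[A\|f\|^2,B\|f\|^2]$ to self-adjointness and positivity of $S_{VF}$, and then get self-adjointness and positivity of $V=S_{VF}S_F^{-1}=S_{VF}^{1/2}S_F^{-1}S_{VF}^{1/2}$ via commuting square roots. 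Your version buys several things: it localizes exactly where each hypothesis enters, it supplies the justification that the eigenvalues $\alpha_k$ of $S_{VF}$ are positive (which the paper's proof needs but never states --- it follows from the lower frame bound, as in your $S_{VF}\ge AI$), and the square-root argument extends verbatim to infinite dimensions, whereas the eigenbasis computation is intrinsically finite-dimensional. The paper's version buys an explicit spectral description of $V$ (its eigenvalues are the ratios $\alpha_k/\beta_k$), which is more concrete. One caveat on your normality step: over a complex Hilbert space the real-valuedness of $\langle S_{VF}f,f\rangle$ already forces self-adjointness by polarization, so normality is strictly redundant there; over a real Hilbert space (as in the paper's $\mathbb{R}^2$ example) ``normal with real numerical range'' does \emph{not} imply symmetric (consider $\bigl(\begin{smallmatrix} a & b\\ -b & a\end{smallmatrix}\bigr)$ with $a>0$, $b\ne 0$), so your phrasing of that implication should be restricted to the complex setting --- which is evidently the intended one in both your argument and the paper's.
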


\begin{proof} By Proposition \ref{p9},  $ \{F_{\varsigma}\}_{\varsigma\in \mathfrak{A}} $ is a continuous frame with the frame operator $ S_{F}=V^{-1}S_{VF} $. 	
	We have $ S_{VF}V=V S_{VF}$ and so $ S_{VF}S_{F}= VS_{F}S_{F}= S_{F}VS_{F}= S_{F}S_{VF}$.
		There exists a set of common orthonormal eigenvectors of $ S_{VF} $ and $ S_{F} $ as  $ \{e_{k}\}_{k\in\{1,2,...,N\}} $.
	
	Let $ \{\alpha_{k}\}_{k\in\{1,2,...,N\}} $ and $ \{\beta_{k}\}_{k\in\{1,2,...,N\}} $ be  eigenvalues of operators $ S_{VF} $ and $ S_{F} $, respectively.
	
	For $ \varsigma\in\mathfrak{A} $,  we have
	\begin{equation*}
		Ve_{k}= ( S_{VF} S_{F}^{-1}) (e_{k})= S_{VF}(\beta_{k}^{-1}e_{k})=\beta_{k}^{-1} \alpha_{k} e_{k}.
	\end{equation*}
Then 
\begin{equation*} 
	Vf= \sum_{k=1}^{N} \beta_{k}^{-1} \alpha_{k} \langle f,e_{k}\rangle 
\end{equation*}
Which follows $ V $ is a positive operator.
\end{proof}

\begin{proposition} Let $ F=\{F_{\varsigma}\}_{\varsigma\in \mathfrak{A}} $ be a continuous frame for $ \mathcal{H}^{N} $ with the frame operator $ S_{F} $. If $ \{e_{k}\} _{k\in\{1,2,...,N\}}$  and $ \{\beta_{k}\} _{k\in\{1,2,...,N\}}$ are the set of orthonormal eigenvectors and the set of eigenvalues of $ S_{F} $, respectively, then for every set   $ \{\alpha_{k}\} _{k\in\{1,2,...,N\}} \subseteq (0,+\infty)$,  $ F=\{F_{\varsigma}\}_{\varsigma\in \mathfrak{A}} $ is a $ V- $controlled frame, where $ V $ is defined by  $ V e_{k}= \alpha_{k}e_{k}$, for $ k= 1,\cdots,N $.	
\end{proposition}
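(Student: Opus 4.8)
The plan is to recognize that the operator $V$ defined by $Ve_k = \alpha_k e_k$ is, by construction, diagonal in the very orthonormal eigenbasis $\{e_k\}_{k\in\{1,\dots,N\}}$ that diagonalizes $S_F$, and then simply to invoke Theorem \ref{t3.4}. That theorem guarantees the desired conclusion as soon as one knows that $V \in GL^{+}(\mathcal{H}^N)$, that $V$ is self-adjoint, and that $VS_F = S_F V$. So the proof reduces to verifying these three hypotheses for the specific $V$ at hand, after which the statement that $F$ is a $V$-controlled integral frame is immediate.

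The verifications are all read off from the spectral description of $V$. Extending $V$ linearly via $f = \sum_{k=1}^{N} \langle f, e_k\rangle e_k$ gives $Vf = \sum_{k=1}^{N} \alpha_k \langle f, e_k\rangle e_k$. Since each $\alpha_k$ is a positive real number, $V$ is self-adjoint and one computes $\langle Vf, f\rangle = \sum_{k=1}^{N} \alpha_k |\langle f, e_k\rangle|^2 > 0$ for $f \neq 0$, so $V$ is positive; its inverse $V^{-1}e_k = \alpha_k^{-1} e_k$ is bounded because $\mathcal{H}^N$ is finite dimensional, whence $V \in GL^{+}(\mathcal{H}^N)$. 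For the commutation relation, it suffices to test on the basis: comparing $VS_F e_k = \beta_k \alpha_k e_k$ with $S_F V e_k = \alpha_k \beta_k e_k$ shows $VS_F e_k = S_F V e_k$ for every $k$, and hence $VS_F = S_F V$ by linearity.

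With these properties established, Theorem \ref{t3.4} applies directly and yields that $F$ is a $V$-controlled integral frame, completing the argument. I expect no substantive obstacle here; the only point deserving a word of care — and it is entirely routine in finite dimensions — is that $V$ is a genuinely well-defined bounded operator with bounded inverse, which holds precisely because $\{e_k\}$ is an orthonormal basis and the index set is finite, so no question of convergence or unboundedness of $V^{-1}$ can arise. The whole content of the proposition is really that a positive diagonal operator sharing the eigenbasis of $S_F$ automatically commutes with $S_F$, which is exactly the commutativity hypothesis that Theorem \ref{t3.4} requires.
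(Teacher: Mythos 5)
Your proof is correct and follows essentially the same route as the paper: both arguments hinge on computing, via the common orthonormal eigenbasis $\{e_k\}$, that $VS_F e_k = \alpha_k\beta_k e_k = S_F V e_k$ (hence $VS_F = S_F V$) and on noting that $\{\alpha_k\}\subseteq(0,\infty)$ makes $V$ positive, self-adjoint and invertible. The only cosmetic difference is that you conclude by citing Theorem \ref{t3.4} explicitly, while the paper concludes by observing directly that $VS_F$ is positive and invertible and appealing to Proposition \ref{p1} --- which is precisely the content of Theorem \ref{t3.4} anyway.
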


\begin{proof}Let $ f\in \mathcal{H}^{N} $. Then  we have	
	\begin{align*}
		VS_{F}f=VS_{F}(\sum_{k=1}^{N} \langle f,e_{k} \rangle e_{k}) )&= V(\sum_{k=1}^{N} \langle f,e_{k} \rangle S_{F}e_{k})\\
		&=\sum_{k=1}^{N}\alpha_{k} \langle f,e_{k} \rangle  Ve_{k}\\
		&= \sum_{k=1}^{N}\beta_{k} \langle f,e_{k} \rangle  \alpha_{k}e_{k}\\
		&= \sum_{k=1}^{N}\alpha_{k} \langle f,e_{k} \rangle  S_{F}e_{k}\\
		&= S_{F}\sum_{k=1}^{N} \langle f,e_{k} \rangle  \alpha_{k}e_{k}\\
		&= S_{F}\sum_{k=1}^{N} \langle f,e_{k} \rangle  Ve_{k}\\
		&= S_{F}Vf.
	\end{align*}
Since $ \{\alpha_{k}\}\subset (0,\infty) $, so 	
 $ V $  is positive and invertible and also $ V $ and $ VS_{F} $ commute with each other. So  $ VS_{F} $ is an invertible and positive operator with
	\begin{equation*}
		VS_{F}f=V (\sum_{k=1}^{N} \langle f,F_{k}\rangle F_{k} )=\sum_{k=1}^{N}\langle f,F_{k}\rangle VF_{k} .
	\end{equation*} 
Therefore, $ \{F_{\varsigma}\} _{\varsigma}$ is a $ V- $controlled integral frame with the frame operator $ VS_{F} $.
\end{proof}

\begin{theorem}Let $ F=\{F_{\varsigma}\}_{\varsigma\in \mathfrak{A}} $ be a $ V- $ controlled integral frame with frame operator $ S_{VF} $ and $ L\in GL(\mathcal{H}^{N}) $ ($ L $ is positive and so it is self -adjoint) such that $ LV =VL $. Then $ \{LF_{\varsigma}\}_{\varsigma \in\mathfrak{A}} $ is a $ V- $controlled frame with frame operator $ LS_{VF} L^{\ast}$. Moreover, $ \{L^{k}F_{\varsigma}\}_{\varsigma \in\mathfrak{A}} $ is a $ V- $ controlled integral frame for $ k\in\mathbb{R} $ with frame operator $ L^{k}S_{VF}(L^{k})^{\ast} $. 
	
\end{theorem}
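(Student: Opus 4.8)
The plan is to prove the statement about $L$ directly by computing the $V$-controlled frame operator of the rescaled family $\{LF_{\varsigma}\}$, and then to deduce the $L^{k}$ case as a corollary once I verify that $L^{k}$ inherits all the hypotheses imposed on $L$. First I would write down the candidate controlled frame operator of $\{LF_{\varsigma}\}$, namely $S_{V,LF}f=\int_{\mathfrak{A}}\langle f,LF_{\varsigma}\rangle\,V(LF_{\varsigma})\,d\mu(\varsigma)$, and simplify it. Using that $L$ is self-adjoint I move $L$ off the first slot via $\langle f,LF_{\varsigma}\rangle=\langle Lf,F_{\varsigma}\rangle$, and using the commutation relation $VL=LV$ I pull $L$ out of the second slot via $V(LF_{\varsigma})=LVF_{\varsigma}$. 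Factoring the (bounded) operator $L$ outside the integral, the remaining integral is exactly $S_{VF}$ evaluated at $Lf$, so that $S_{V,LF}f=L\!\int_{\mathfrak{A}}\langle Lf,F_{\varsigma}\rangle VF_{\varsigma}\,d\mu(\varsigma)=LS_{VF}Lf=LS_{VF}L^{\ast}f$, which is the claimed operator.

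Next I would check that $LS_{VF}L^{\ast}$ is positive and invertible so that Proposition~\ref{p1} can be applied. Invertibility is immediate: $L,L^{\ast}\in GL(\mathcal{H}^{N})$ and $S_{VF}$ is invertible because $F$ is a $V$-controlled integral frame (Proposition~\ref{p1} applied to $F$), so the product is invertible. Self-adjointness follows from $(LS_{VF}L^{\ast})^{\ast}=LS_{VF}^{\ast}L^{\ast}=LS_{VF}L^{\ast}$, using that $S_{VF}$ is self-adjoint. For strict positivity I compute $\langle LS_{VF}L^{\ast}f,f\rangle=\langle S_{VF}(L^{\ast}f),L^{\ast}f\rangle$, which is positive for every $f\neq0$ since $L^{\ast}$ is invertible (so $L^{\ast}f\neq0$) and $S_{VF}$ is positive and invertible. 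Proposition~\ref{p1} then yields that $\{LF_{\varsigma}\}_{\varsigma\in\mathfrak{A}}$ is a $V$-controlled integral frame with frame operator $LS_{VF}L^{\ast}$.

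For the \emph{moreover} assertion I would argue that it suffices to verify that $L^{k}$ satisfies the exact hypotheses placed on $L$ and then reapply the first part with $L$ replaced by $L^{k}$, which produces the frame operator $L^{k}S_{VF}(L^{k})^{\ast}$. Since $L$ is positive and invertible on the finite-dimensional space $\mathcal{H}^{N}$, all its eigenvalues are strictly positive, so by the spectral theorem $L^{k}$ is a well-defined positive, self-adjoint, invertible operator for every real $k$. The one point that needs care is the commutation $L^{k}V=VL^{k}$, which I would obtain from $LV=VL$ through functional calculus: a diagonalizing orthonormal eigenbasis of $L$ makes $V$ block-diagonal with respect to the eigenspaces of $L$, and the operator $L^{k}$ simply scales the $\lambda$-eigenspace by $\lambda^{k}$, hence commutes with any operator preserving those eigenspaces. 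With these three properties in hand, the first part applies verbatim to $L^{k}$.

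The step I expect to be the main obstacle is precisely this commutation $L^{k}V=VL^{k}$ for arbitrary real exponents, since it is the only place requiring genuine input beyond adjoint bookkeeping and the equivalence of Proposition~\ref{p1}; in finite dimensions the cleanest route is the diagonalization argument just described, which reduces the claim to the elementary observation that $VL=LV$ forces $V$ to respect the eigenspace decomposition of the positive operator $L$. Everything else — the operator identity $S_{V,LF}=LS_{VF}L^{\ast}$ and the positivity/invertibility verification — is routine.
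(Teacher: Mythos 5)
Your proposal is correct and follows essentially the same route as the paper: compute $S_{V(LF)}f=\int_{\mathfrak{A}}\langle f,LF_{\varsigma}\rangle VLF_{\varsigma}\,d\mu(\varsigma)=LS_{VF}L^{\ast}f$ using self-adjointness of $L$ and $LV=VL$, verify that this operator is positive and invertible, invoke Proposition~\ref{p1}, and then reapply the argument with $L$ replaced by $L^{k}$. Your only departure is an improvement in rigor: the paper simply asserts $L^{k}V=VL^{k}$, whereas you justify it by the spectral-theorem observation that $V$ preserves the eigenspaces of the positive operator $L$ on which $L^{k}$ acts by the scalars $\lambda^{k}$.
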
 

\begin{proof}We have 
	\begin{equation*}
		S_{VLF}(f)= \int_{\mathfrak{A}}\langle f,LF_{\varsigma}\rangle VLF_{\varsigma} d\mu(\varsigma)= \int_{\mathfrak{A}} \langle f,LF_{\varsigma}\rangle LVF_{\varsigma} d\mu(\varsigma)= LS_{VF}L^{\ast}f.
	\end{equation*}
	Thus $ S_{VLF}= LS_{VF}L^{\ast} $  is invertible and
	\begin{equation*}
	\langle LS_{VF}L^{\ast}f,f\rangle = \langle S_{VF}L^{\ast},L^{\ast}f \geq0,\;\forall f\in\mathcal{H}^{N}, \; i.e.,\; S_{VF}\geq 0 .
	\end{equation*}
	This gives that $ S_{VLF} $ is positive. Hence $ \{LF_{\varsigma}\}_{\varsigma} $ is a $ V- $ controlled integral frame. 
	Also we have $L^k V = V l^k$.  Thus  $ \{L^{k}F_{\varsigma}\}_{\varsigma} $ is a $ V-$controlled integral frame with the frame operator  $ L^{k}S_{VF}(L^{k})^{\ast} $.  	
\end{proof}

\begin{corollary}If $ \{F_{\varsigma}\}_{\varsigma} $ is a $ V- $ controlled integral frame such that $ VS_{F} =S_{F}V$, then $ \{ S_{F}^{\frac{\beta-1}{2}}F_{\varsigma}\} _{\varsigma}$ is a $ V- $controlled integral frame for any $ \beta \in\mathbb{R} $, with frame operator $ VS_{F} $.	
\end{corollary}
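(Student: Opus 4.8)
The plan is to derive this corollary directly from the preceding theorem by taking the commuting operator $L$ to be a real power of the continuous frame operator $S_F$. First I would set $L := S_F^{\frac{\beta-1}{2}}$. By Proposition \ref{p9} the family $\{F_\varsigma\}_\varsigma$ is a continuous frame whose frame operator $S_F = V^{-1}S_{VF}$ is positive and invertible on the finite-dimensional space $\mathcal{H}^N$; hence the real power $S_F^{\frac{\beta-1}{2}}$ is well defined through the functional calculus (equivalently, by diagonalizing $S_F$ in an orthonormal eigenbasis) and is itself positive, self-adjoint and invertible, so $L \in GL^{+}(\mathcal{H}^N)$.

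Next I would check the commutation hypothesis $LV = VL$ demanded by the theorem, which is precisely where the assumption $VS_F = S_F V$ is used. Since $V$ commutes with $S_F$, it commutes with every polynomial in $S_F$, hence with every continuous function of $S_F$ on its finite spectrum, in particular with $L$. Concretely, in a common orthonormal eigenbasis $\{e_k\}$ of $S_F$ with $S_F e_k = \lambda_k e_k$ one has $L e_k = \lambda_k^{\frac{\beta-1}{2}} e_k$, and $V$ preserves each eigenspace, so $LV = VL$ is immediate.

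With $L$ positive, self-adjoint, invertible and commuting with $V$, the preceding theorem applies verbatim and gives that $\{L F_\varsigma\}_\varsigma = \{S_F^{\frac{\beta-1}{2}} F_\varsigma\}_\varsigma$ is a $V$-controlled integral frame with frame operator $L S_{VF} L^{\ast}$. The only computational step left is to simplify this operator. Using $L^{\ast}=L$, the identity $S_{VF} = V S_F$ from Proposition \ref{p9}, and the commutativity of $V$ with every power of $S_F$, I would combine the exponents as
\[
L S_{VF} L^{\ast} = S_F^{\frac{\beta-1}{2}}\, V S_F\, S_F^{\frac{\beta-1}{2}} = V\, S_F^{\frac{\beta-1}{2}+1+\frac{\beta-1}{2}} = V S_F^{\beta}.
\]

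The step I expect to require the most care is this final identification of the frame operator: an honest exponent count yields $V S_F^{\beta}$, which coincides with the $V S_F$ appearing in the statement in general only when the underlying continuous frame is Parseval, i.e.\ $S_F = I$. I would therefore record the frame operator as $V S_F^{\beta}$, observing that its positivity and invertibility are automatic since $V$ is positive, $S_F^{\beta}$ is positive and invertible, and the two commute; by Proposition \ref{p1} this re-confirms that $\{S_F^{\frac{\beta-1}{2}} F_\varsigma\}_\varsigma$ is a $V$-controlled integral frame for every $\beta \in \mathbb{R}$.
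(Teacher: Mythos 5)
Your proof is correct and is essentially the paper's own argument: the corollary carries no separate proof in the paper and is meant to follow from the preceding theorem by taking $L=S_{F}^{1/2}$ and $k=\beta-1$ (so $L^{k}=S_{F}^{\frac{\beta-1}{2}}$), with the hypothesis $VS_{F}=S_{F}V$ supplying the required commutation $LV=VL$ through the functional calculus exactly as you check. Your exponent count is also right and catches a genuine error in the statement itself: the frame operator is $L^{k}S_{VF}(L^{k})^{\ast}=S_{F}^{\frac{\beta-1}{2}}\,VS_{F}\,S_{F}^{\frac{\beta-1}{2}}=VS_{F}^{\beta}$, which agrees with the claimed $VS_{F}$ only when $\beta=1$ or the underlying frame is Parseval ($S_{F}=I$). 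One small adjustment to your closing remark: positivity of $VS_{F}^{\beta}$ should not be deduced from ``$V$ is positive'' (the definition of a $V$-controlled frame only demands $V$ invertible); it follows instead from the congruence $VS_{F}^{\beta}=L^{k}S_{VF}(L^{k})^{\ast}$ with $S_{VF}$ positive and invertible, after which Proposition \ref{p1} applies as you say.
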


  Gramian operator or Gramian matrix  for a $ V- $controlled frame has been defined in \cite{Z} and we introduce Gramian operator for $ V- $controlled frame in finite Hilbert spaces, and we consider its properties.

\begin{definition} Let $ \{F_{\varsigma}\}_{\varsigma} $ be a $ V- $controlled integral frame with analysis operator $ T_{F} $ and synthesis operator $ T^{\ast}_{VF} $. Then  the operator  $ G_{VF}: = T_{F}T^{\ast}_{VF} $  is called a $ V-$Gramian operator. The canonical matrix representation of Gramian operator of a $ V-$controlled integral frame $ \{F_{\varsigma}\} $ is obtained by 
	\begin{equation*}
		G= (\langle V F_{i},F_{j}\rangle )_{i,j\in\mathfrak{A}}.
	\end{equation*}
	
	
	 
\end{definition}

The following theorem investigates the Gramian matrix of the transferred $ V- $controlled integral frames.

\begin{theorem} Let $ \{F_{\varsigma}\}_{\varsigma} $ be a $ V-$controlled integral frame for $ \mathcal{H}^{N} $ and $ T $ be a linear operator that commutes with $ V $. Then $T$ is unitary if and only if the $ V-$Gramian matrix of $ \{TF_{\varsigma}\} $ is equal to $ G_{VF} $.
	\end{theorem}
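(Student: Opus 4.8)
The plan is to reduce the asserted matrix identity (equality of the $V$-Gramian matrix of $\{TF_{\varsigma}\}$ with $G_{VF}$) to the single operator identity $T^{\ast}T=I$, and then to invoke completeness of the frame together with finite-dimensionality. First I would write down the generic entry of the $V$-Gramian matrix of $\{TF_{\varsigma}\}$, namely $\langle V(TF_{i}),TF_{j}\rangle$ for $i,j\in\mathfrak{A}$, and use the hypothesis $VT=TV$ to push $V$ through $T$:
\begin{equation*}
\langle V(TF_{i}),TF_{j}\rangle=\langle TVF_{i},TF_{j}\rangle=\langle T^{\ast}TVF_{i},F_{j}\rangle .
\end{equation*}
The corresponding entry of $G_{VF}$ is $\langle VF_{i},F_{j}\rangle$, so the two matrices coincide precisely when $\langle T^{\ast}TVF_{i},F_{j}\rangle=\langle VF_{i},F_{j}\rangle$ for all $i,j\in\mathfrak{A}$.

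For the forward implication, if $T$ is unitary then $T^{\ast}T=I$ and the displayed computation immediately yields $\langle V(TF_{i}),TF_{j}\rangle=\langle VF_{i},F_{j}\rangle$, so the $V$-Gramian matrix of $\{TF_{\varsigma}\}$ equals $G_{VF}$. This direction is purely formal once the commutation $VT=TV$ has been used.

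For the converse, I would rewrite the equality of entries as $\langle (T^{\ast}TV-V)F_{i},F_{j}\rangle=0$ for all $i,j\in\mathfrak{A}$. Here the crucial point is that a $V$-controlled integral frame is complete, so $\{F_{\varsigma}\}_{\varsigma\in\mathfrak{A}}$ spans $\mathcal{H}^{N}$. Fixing $i$ and letting $j$ vary shows that $(T^{\ast}TV-V)F_{i}$ is orthogonal to a spanning set, hence $(T^{\ast}TV-V)F_{i}=0$; then letting $i$ vary shows that $T^{\ast}TV-V$ vanishes on a spanning set, so $T^{\ast}TV=V$. Since $V$ is invertible this forces $T^{\ast}T=I$. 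Finally, in the finite-dimensional space $\mathcal{H}^{N}$ the relation $T^{\ast}T=I$ makes $T$ an injective, hence bijective, isometry with $T^{-1}=T^{\ast}$, so $T$ is unitary.

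The step I expect to be the main obstacle is the converse, specifically the passage from equality of all the scalar entries $\langle (T^{\ast}TV-V)F_{i},F_{j}\rangle$ to the genuine operator identity $T^{\ast}TV=V$. This upgrade rests entirely on the fact that the range of the frame spans $\mathcal{H}^{N}$, together with the finite-dimensionality needed to promote $T^{\ast}T=I$ to unitarity; everything else is formal manipulation using $VT=TV$ and the invertibility of $V$. Some care is also warranted in interpreting the continuous (integral) "matrix" $G=(\langle VF_{i},F_{j}\rangle)_{i,j\in\mathfrak{A}}$ consistently with this completeness argument.
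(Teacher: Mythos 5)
Your proposal is correct and follows essentially the same route as the paper: the forward direction is the same formal computation using $VT=TV$ and $T^{\ast}T=I$, and the converse likewise upgrades the vanishing of all entries $\langle (T^{\ast}TV-V)F_{i},F_{j}\rangle$ to the operator identity via completeness of the frame, then uses invertibility of $V$ to get $T^{\ast}T=I$. The only cosmetic differences are that the paper implements the completeness step through the reconstruction formula $f=\int_{\mathfrak{A}}\langle f,S_{F}^{-1}F_{\varsigma}\rangle F_{\varsigma}\,d\mu(\varsigma)$ applied to $(T^{\ast}VT-V)f$ rather than your twofold spanning argument, and it invokes the commutation $VT=TV$ after reaching $T^{\ast}VT=V$ instead of before --- both equivalent steps.
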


\begin{proof} Suppose that   $ T $ is unitary. Then we have	
	\begin{equation*}
		G_{V(TF)}=\{\langle TVF_{\beta},TF_{\alpha}\rangle \}_{\beta,\alpha}=\{\langle VF_{\beta},F_{\alpha} \rangle\}= G_{VF}.
	\end{equation*}

Conversely, let $ G_{VF}=G_{V(TF)} $. Then 
\begin{equation*}
	\langle VTF_{\beta},TF_{\alpha}\rangle = \langle VF_{\beta},F_{\alpha}\rangle 
\end{equation*}
and 
\begin{equation*}
	\langle T^{\ast}VTF_{\beta} -VF_{\beta}, F_{\alpha}\rangle=0 .
\end{equation*}
For $ f\in\mathcal{H}^{N} $, we have 
\begin{equation*}
	f=\int_{\mathfrak{A}} \langle f,S_{F}^{-1}F_{\varsigma} \rangle d\mu(\varsigma).
\end{equation*} 
Then
\begin{align*}
  	(T^{\ast}VT-V)f& = (T^{\ast}VT-V) \int_{\mathfrak{A}} \langle f,S_{F}^{-1}F_{\varsigma} \rangle F_{\varsigma} d\mu(\varsigma)\\
  	& =  \int_{\mathfrak{A}} \langle f,S_{F}^{-1}F_{\varsigma} \rangle (T^{\ast}VT-V)  F_{\varsigma} d\mu(\varsigma)\\
  	 & =0 . 
\end{align*}
Thus we have  $ T^{\ast}VT= V $ and $ T^{\ast}T=I $.
\end{proof}

Parseval frames are the closest family to orthonormal bases. We present and study some properties of Parseval controlled integral frames in a finite-dimensional Hilbert space.

\begin{lemma}\cite{GAV}\label{l.12} Let $\mathcal{H}^{N}$ be an $ N- $dimensional Hilbert space, and $G, L: \mathfrak{A} \rightarrow \mathcal{H}^{N} $ be continuous Parseval frames and $ K \in L( \mathcal{H}^{N}) $ be self-adjoint. Then 
	\begin{equation*}
		\int_{\mathfrak{A}} \Vert K G(\varsigma)\Vert ^{2}d\mu(\varsigma)=	\int_{\mathfrak{A}} \Vert K L(\varsigma)\Vert ^{2}d\mu(\varsigma).
	\end{equation*}	
\end{lemma}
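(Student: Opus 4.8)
The plan is to reduce \emph{both} integrals to one and the same quantity that depends only on the operator $K$ and not at all on the particular Parseval frame used. The guiding observation is that $\int_{\mathfrak{A}}\Vert K G(\varsigma)\Vert^{2}d\mu(\varsigma)$ ought to collapse to $\sum_{k=1}^{N}\Vert K e_{k}\Vert^{2}$ for any fixed orthonormal basis $\{e_{k}\}$ of $\mathcal{H}^{N}$ --- that is, to the Hilbert--Schmidt norm $\operatorname{tr}(K^{\ast}K)$ --- and this expression manifestly involves neither $G$ nor $L$.

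First I would fix an orthonormal basis $\{e_{k}\}_{k=1}^{N}$ of $\mathcal{H}^{N}$ and expand the integrand \emph{pointwise} by Parseval's identity for that basis,
\begin{equation*}
\Vert K G(\varsigma)\Vert^{2}=\sum_{k=1}^{N}\vert\langle K G(\varsigma), e_{k}\rangle\vert^{2}.
\end{equation*}
Since $K$ is self-adjoint, $\langle K G(\varsigma), e_{k}\rangle=\langle G(\varsigma), K e_{k}\rangle$, so each summand equals $\vert\langle K e_{k}, G(\varsigma)\rangle\vert^{2}$. Integrating over $\mathfrak{A}$ and interchanging the (finite) sum with the integral gives
\begin{equation*}
\int_{\mathfrak{A}}\Vert K G(\varsigma)\Vert^{2}d\mu(\varsigma)=\sum_{k=1}^{N}\int_{\mathfrak{A}}\vert\langle K e_{k}, G(\varsigma)\rangle\vert^{2}d\mu(\varsigma).
\end{equation*}

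Next I would invoke the defining identity of a continuous Parseval frame, namely $\int_{\mathfrak{A}}\vert\langle f, G(\varsigma)\rangle\vert^{2}d\mu(\varsigma)=\Vert f\Vert^{2}$ for every $f\in\mathcal{H}^{N}$, applied to the test vector $f=K e_{k}$. This yields $\int_{\mathfrak{A}}\vert\langle K e_{k}, G(\varsigma)\rangle\vert^{2}d\mu(\varsigma)=\Vert K e_{k}\Vert^{2}$, whence
\begin{equation*}
\int_{\mathfrak{A}}\Vert K G(\varsigma)\Vert^{2}d\mu(\varsigma)=\sum_{k=1}^{N}\Vert K e_{k}\Vert^{2}.
\end{equation*}
The right-hand side does not mention $G$; running the identical computation with $L$ in place of $G$ produces exactly the same value, and the two integrals therefore coincide.

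As for difficulty, I do not anticipate a genuine obstacle here: the argument is essentially three lines once one commits to expanding $\Vert K G(\varsigma)\Vert^{2}$ in a basis \emph{before} integrating rather than after. The only points demanding (minor) care are the use of self-adjointness to transfer $K$ off $G(\varsigma)$ and onto the fixed vectors $e_{k}$, and the recognition that the common value $\sum_{k}\Vert K e_{k}\Vert^{2}=\operatorname{tr}(K^{\ast}K)$ is basis-independent, so the choice of $\{e_{k}\}$ is immaterial. Finite-dimensionality of $\mathcal{H}^{N}$ is precisely what renders the sum finite and the sum--integral interchange automatic, with no convergence or measurability subtleties to address.
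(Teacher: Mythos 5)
Your proof is correct. Note that the paper itself supplies no proof of this lemma --- it is quoted from the cited reference \cite{GAV} --- so there is no in-paper argument to compare against; your reduction of both integrals to the frame-independent quantity $\sum_{k=1}^{N}\Vert K e_{k}\Vert^{2}=\operatorname{tr}(K^{\ast}K)$ is exactly the standard proof of this fact, and every step checks out: the pointwise Parseval expansion in a fixed orthonormal basis, the transfer of $K$ onto $e_{k}$ via self-adjointness, the interchange of the finite sum with the integral, and the application of the Parseval frame identity to the test vectors $f=Ke_{k}$. One small observation worth recording: self-adjointness of $K$ is not actually needed for your argument. Without it, write $\langle KG(\varsigma),e_{k}\rangle=\langle G(\varsigma),K^{\ast}e_{k}\rangle$ and run the same computation to get $\int_{\mathfrak{A}}\Vert KG(\varsigma)\Vert^{2}\,d\mu(\varsigma)=\sum_{k=1}^{N}\Vert K^{\ast}e_{k}\Vert^{2}=\operatorname{tr}(KK^{\ast})$, which is again independent of the frame; so the conclusion holds for arbitrary $K\in L(\mathcal{H}^{N})$, and the hypothesis in the statement is stronger than necessary.
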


\begin{theorem} \label{t}Let $\mathcal{H}^{N}$ be an $ N- $dimensional Hilbert space, and $ F $ be a frame for  $\mathcal{H}^{N}$ and $ G $ be a Parseval  $ V- $ controlled integral frame for  $\mathcal{H}^{N}$.	
	 Then
	\begin{eqnarray*}
	\int_{\mathfrak{A}}\Vert V (G(\varsigma)-F(\varsigma))\Vert^{2}d\mu(\varsigma) &=& \int_{\mathfrak{A}}\Vert VS^{-1/2}F(\varsigma)-VF(\varsigma)\Vert^{2}d\mu(\varsigma)\\ &+& \int_{\mathfrak{A}}\Vert V(S^{1/4}G(\varsigma)- S^{-1/4}F(\varsigma))\Vert^{2} d\mu(\varsigma).
	\end{eqnarray*} 	
\end{theorem}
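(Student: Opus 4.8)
The plan is to read $S$ as the frame operator $S_{F}$ of $F$, to convert the three integrals into traces of operators assembled from $S$ and $V$, and to use Lemma~\ref{l.12} as the instrument that trades the Parseval frame concealed in $G$ for the canonical Parseval frame $S^{-1/2}F$.

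First I would record the two Parseval frames that drive the argument. Because $G$ is a Parseval $V$-controlled integral frame, its controlled frame operator is $S_{VG}=\int_{\mathfrak{A}}\langle\,\cdot\,,G_{\varsigma}\rangle VG_{\varsigma}\,d\mu(\varsigma)=I$; since $S_{VG}=VS_{G}$ (exactly as in the proof of Proposition~\ref{p9}), this forces $S_{G}=V^{-1}$. In particular $V$ is positive, $V$ and $S_{G}$ commute, and $V^{1/2}G$ is an ordinary continuous Parseval frame. On the other side, $S^{-1/2}F$ is the canonical Parseval frame attached to $F$. These two objects are precisely what Lemma~\ref{l.12} compares.

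Next I would expand every squared norm through $\|x-y\|^{2}=\|x\|^{2}-2\,\mathrm{Re}\,\langle x,y\rangle+\|y\|^{2}$, turning the asserted equality into a balance between pure energies $\int_{\mathfrak{A}}\|VA_{\varsigma}\|^{2}\,d\mu$ and cross energies $\int_{\mathfrak{A}}\langle VA_{\varsigma},VB_{\varsigma}\rangle\,d\mu$. In finite dimensions one has $\int_{\mathfrak{A}}\langle CA_{\varsigma},A_{\varsigma}\rangle\,d\mu(\varsigma)=\mathrm{tr}(C\,S_{A})$, together with the mixed version $\int_{\mathfrak{A}}\langle CA_{\varsigma},B_{\varsigma}\rangle\,d\mu=\mathrm{tr}(C\,S_{A,B})$, where $S_{A,B}f=\int_{\mathfrak{A}}\langle f,B_{\varsigma}\rangle A_{\varsigma}\,d\mu$. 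Using these, the $F$-energies collapse by cyclicity of the trace, namely $\int\|VF\|^{2}=\mathrm{tr}(V^{2}S)$, $\int\|VS^{-1/2}F\|^{2}=\mathrm{tr}(V^{2})$ and $\int\|VS^{-1/4}F\|^{2}=\mathrm{tr}(V^{2}S^{1/2})$, while the two $G$-energies $\int\|VG\|^{2}$ and $\int\|VS^{1/4}G\|^{2}$ are transported to $F$-quantities: writing $VS^{1/4}G=K\,(V^{1/2}G)$ with $K=VS^{1/4}V^{-1/2}$ and applying Lemma~\ref{l.12} to the Parseval frames $V^{1/2}G$ and $S^{-1/2}F$ removes $G$ from all the pure terms (the self-adjointness of $K$ needed here is addressed below).

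Finally I would treat the two cross terms $\int\langle VG,VF\rangle$ and $\int\langle VS^{1/4}G,VS^{-1/4}F\rangle$, which are the only places where the otherwise uncontrolled mixed operator $S_{G,F}$ appears. Writing them as $\mathrm{tr}(V^{2}S_{G,F})$ and $\mathrm{tr}(S^{-1/4}V^{2}S^{1/4}S_{G,F})$ respectively, the aim is to see that the two sandwiching operators coincide, so that the $S_{G,F}$-contributions cancel between the two sides. I expect this to be the main obstacle: the cancellation, and equally the self-adjointness needed to apply Lemma~\ref{l.12} to $\int\|VS^{1/4}G\|^{2}$, both require moving $V$ past $S^{\pm1/4}$, i.e. the compatibility $VS=SV$ (the same hypothesis invoked in Theorem~\ref{t3.4}). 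Granting this, $S^{-1/4}V^{2}S^{1/4}$ collapses to $V^{2}$, the cross terms cancel, and the whole statement reduces to a scalar identity among $\mathrm{tr}(V)$, $\mathrm{tr}(V^{2})$, $\mathrm{tr}(V^{2}S^{1/2})$ and $\mathrm{tr}(VS^{1/2})$, which I would test by simultaneously diagonalizing $V$ and $S$. Confirming that this residual balance genuinely closes, keeping the $\tfrac14$-powers aligned throughout, is the delicate step, and the point at which any further compatibility between $V$ and $S$ would have to be brought in.
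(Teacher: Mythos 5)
Your framework is sound and your bookkeeping is correct as far as it goes --- in particular the key observation that $S_{VG}=VS_{G}=I$ forces $S_{G}=V^{-1}$, so that $V^{1/2}G$ (and not $G$ itself) is the ordinary Parseval frame to which Lemma \ref{l.12} applies --- but the step you defer at the end is exactly where the argument dies, and it cannot be repaired. Carrying out your own program under the commutation hypothesis $VS=SV$: the pure terms become $\int_{\mathfrak{A}}\Vert VG\Vert^{2}d\mu=\mathrm{Tr}(V)$, $\int_{\mathfrak{A}}\Vert VS^{-1/2}F\Vert^{2}d\mu=\mathrm{Tr}(V^{2})$, $\int_{\mathfrak{A}}\Vert VS^{1/4}G\Vert^{2}d\mu=\mathrm{Tr}(VS^{1/2})$, $\int_{\mathfrak{A}}\Vert VS^{-1/4}F\Vert^{2}d\mu=\mathrm{Tr}(V^{2}S^{1/2})$, the cross terms cancel as you predict, and $\mathrm{Re}\int_{\mathfrak{A}}\langle VS^{-1/2}F,VF\rangle d\mu=\mathrm{Tr}(V^{2}S^{1/2})$. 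The residual scalar balance is then
\begin{equation*}
\mathrm{Tr}\bigl(V(I-V)(I-S^{1/2})\bigr)=0,
\end{equation*}
which is false in general: take $N=1$, $\mathfrak{A}$ a single atom of mass one, $F=2$ (so $S=4$), $V=2$, $G=1/\sqrt{2}$; then $G$ is a Parseval $V$-controlled frame, the left side of the theorem equals $18-8\sqrt{2}$, and the right side equals $16-8\sqrt{2}$. Since $V=2I$ commutes with everything, no commutation hypothesis rescues the identity; the residual vanishes only under extra constraints such as $V=I$, where the statement collapses to Gavruta's classical identity for ordinary Parseval frames. So the theorem as stated is false, and your proposal, executed faithfully, cannot close --- the ``delicate step'' you flagged is a genuine obstruction, not a technicality.

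For comparison, the paper's own proof follows the same outline (Lemma \ref{l.12} plus expansion of the squared norms) but closes only by misapplying the lemma and by inconsistent bookkeeping with $V$: it invokes Lemma \ref{l.12} for the pair $G$ and $L(\varsigma)=VS^{1/2}F(\varsigma)$, yet $G$ is not an ordinary Parseval frame (its frame operator is $V^{-1}$, as you computed) and $VS^{\pm 1/2}F$ has frame operator $VS^{2}V$ resp.\ $V^{2}$, not $I$; it then asserts identities such as $\int_{\mathfrak{A}}\Vert S^{1/4}F\Vert^{2}d\mu=\int_{\mathfrak{A}}\Vert S^{-1/4}F\Vert^{2}d\mu$ and inserts or deletes factors of $V$ from line to line, so that what is actually verified is the $V=I$ case. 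In that sense your trace-based proposal is the honest version of the paper's argument: it isolates precisely the two points the paper glosses over (which frames are genuinely Parseval, and the cross-term commutation), and the simultaneous-diagonalization test you propose at the end is exactly the computation that exposes the failure of the result rather than merely of the proof.
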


\begin{proof}
By Lemma \ref{l.12} with $ L(\varsigma)= VS^{1/2}F(\varsigma) $, we have 
	\begin{equation*}
		\int_{\mathfrak{A}}\Vert G(\varsigma)\Vert^{2} d\mu(\varsigma)= \int_{\mathfrak{A}}\Vert VS^{-1/2}F(\varsigma)\Vert^{2} d\mu(\varsigma)
	\end{equation*} 
	and 
		\begin{equation*}
		\int_{\mathfrak{A}}\Vert S^{1/4} G(\varsigma)\Vert^{2} d\mu(\varsigma)= \int_{\mathfrak{A}}\Vert S^{1/4}F(\varsigma)\Vert^{2} d\mu(\varsigma)=\int_{\mathfrak{A}} \Vert S ^{-1/4}F(\varsigma)\Vert^{2}d\mu(\varsigma).
	\end{equation*}
Thus 
\begin{align*}
&	\int_{\mathfrak{A}}\Vert G(\varsigma)-F(\varsigma)\Vert^{2}d\mu(\varsigma) -\int_{\mathfrak{A}}\Vert VS^{-1/2}F(\varsigma)-VF(\varsigma)\Vert^{2}d\mu(\varsigma)\\
	&=-2Re \int_{\mathfrak{A}}\langle G(\varsigma),F(\varsigma)\rangle d\mu(\varsigma)+2\int_{\mathfrak{A}}\langle  VS^{-1/2}F(\varsigma),F(\varsigma)\rangle d\mu(\varsigma)\\
	&=-2Re\int_{\mathfrak{A}}\langle  VS^{1/4}G(\varsigma),S^{-1/4}F(\varsigma)\rangle d\mu(\varsigma)+\int_{\mathfrak{A}} \Vert VS^{-1/4}F(\varsigma)\Vert^{2} d\mu(\varsigma)+\int_{\mathfrak{A}}\Vert VS^{-1/4}G(\varsigma)\Vert^{2} d\mu(\varsigma)\\
	&= \int_{\mathfrak{A}}\Vert V(S^{1/4}G(\varsigma)- S^{-1/4}F(\varsigma))\Vert^{2} d\mu(\varsigma).
\end{align*} 
This completes the proof.
\end{proof}

\begin{corollary}Let $\mathcal{H}^{N}$ be an $ N-$dimensional Hilbert space and $ F $ be a frame for  $\mathcal{H}^{N}$ with frame operator $ S $. For every  Parseval  $ V-$controlled integral frame $ G $   of $\mathcal{H}^{N}$, we have 
	\begin{equation*}
		\int_{\mathfrak{A}}\Vert V (G(\varsigma)-F(\varsigma))\Vert^{2}d\mu(\varsigma)\geq \int_{\mathfrak{A}}\Vert VS^{-1/2}F(\varsigma)-VF(\varsigma)\Vert^{2}d\mu(\varsigma)
	\end{equation*} 
and we have equality if and only if 
\begin{equation*}
 G(\varsigma)=VS^{-1/2}F(\varsigma)\;,\varsigma\in\mathfrak{A}.
\end{equation*}
 
\end{corollary}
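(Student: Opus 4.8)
The plan is to derive this corollary directly from the decomposition identity of Theorem \ref{t}, of which it is a formal consequence. First I would apply Theorem \ref{t} to the frame $F$ (with frame operator $S$) and the Parseval $V$-controlled integral frame $G$, obtaining
\begin{align*}
\int_{\mathfrak{A}}\Vert V (G(\varsigma)-F(\varsigma))\Vert^{2}d\mu(\varsigma)&=\int_{\mathfrak{A}}\Vert VS^{-1/2}F(\varsigma)-VF(\varsigma)\Vert^{2}d\mu(\varsigma)\\
&\quad+\int_{\mathfrak{A}}\Vert V(S^{1/4}G(\varsigma)- S^{-1/4}F(\varsigma))\Vert^{2} d\mu(\varsigma).
\end{align*}
The final summand is the integral of a nonnegative function against the positive measure $\mu$, hence is $\geq 0$; discarding it yields exactly the asserted inequality.

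For the equality statement I would argue that, the first term on the right being fixed once $F$ is given, equality holds if and only if the discarded summand vanishes, that is
\begin{equation*}
\int_{\mathfrak{A}}\Vert V(S^{1/4}G(\varsigma)- S^{-1/4}F(\varsigma))\Vert^{2} d\mu(\varsigma)=0.
\end{equation*}
Because the integrand is nonnegative and $\mu$ is positive, this is equivalent to $V(S^{1/4}G(\varsigma)- S^{-1/4}F(\varsigma))=0$ for $\mu$-almost every $\varsigma$. Since $V$ is invertible I may cancel it to get $S^{1/4}G(\varsigma)=S^{-1/4}F(\varsigma)$, and then apply $S^{-1/4}$, which exists as a positive bounded operator because $S$ is positive and invertible on the finite-dimensional space $\mathcal{H}^{N}$, to reach $G(\varsigma)=S^{-1/2}F(\varsigma)$ $\mu$-almost everywhere. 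The converse is immediate: substituting $G(\varsigma)=S^{-1/2}F(\varsigma)$ annihilates the final summand and turns the identity into an equality.

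There is essentially no deep obstacle here, as the result is a black-box application of Theorem \ref{t}; the only step I would treat with real care is the passage from the vanishing of the \emph{integral} to the pointwise relation. This passage uses two ingredients explicitly: the positivity of $\mu$, which forces a nonnegative integrand of zero integral to vanish $\mu$-a.e., and the invertibility of $V$, which lets it be stripped from $V(S^{1/4}G-S^{-1/4}F)=0$. I would flag that the characterization is naturally an almost-everywhere statement, and I would double-check the exponent bookkeeping $S^{1/4}\cdot S^{1/4}=S^{1/2}$ to confirm that the extremal frame is $G(\varsigma)=S^{-1/2}F(\varsigma)$.
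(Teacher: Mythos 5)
Your route is the same as the paper's: both treat the corollary as a black-box consequence of Theorem \ref{t}, dropping the nonnegative third term to get the inequality and characterizing equality by its vanishing. On that shared route your write-up is actually the more careful one — the paper simply says the first part is immediate and asserts the equivalence for the equality case, whereas you make explicit the passage from a vanishing integral of a nonnegative integrand to a $\mu$-almost-everywhere identity, correctly crediting the positivity of $\mu$ and the invertibility of $V$, and you rightly note that the characterization is intrinsically an a.e.\ statement rather than the pointwise one the paper claims.

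There is, however, a substantive discrepancy you did not flag: the condition you derive is $G(\varsigma)=S^{-1/2}F(\varsigma)$, while the corollary you were asked to prove asserts $G(\varsigma)=VS^{-1/2}F(\varsigma)$. These differ unless $V=I$. The mismatch traces to the paper itself: in Theorem \ref{t} as printed, the third term is $\int_{\mathfrak{A}}\Vert V(S^{1/4}G(\varsigma)-S^{-1/4}F(\varsigma))\Vert^{2}d\mu(\varsigma)$, so — exactly as you argue — the invertible $V$ cancels and the extremal frame carries no $V$. The paper's own proof of the corollary instead writes the vanishing condition as $S^{1/4}G(\varsigma)=VS^{-1/4}F(\varsigma)$, silently relocating $V$ inside the difference and attaching it to $F$ alone, and then applies $S^{-1/4}$ as though $S^{-1/4}V=VS^{-1/4}$; neither step is licensed by the theorem as stated. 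So your argument is the logically correct consequence of Theorem \ref{t}, but it proves a different equality condition from the one in the statement: a complete solution had to either note that the claimed condition $G=VS^{-1/2}F$ does not follow from the theorem in its printed form, or identify the repair (the third term would need to read $\Vert S^{1/4}G(\varsigma)-VS^{-1/4}F(\varsigma)\Vert^{2}$, together with a commutation hypothesis such as $VS=SV$, for the stated condition to emerge). Treating the theorem as a black box without reconciling your conclusion against the statement is the one genuine gap in an otherwise sound proposal.
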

\begin{proof}The first part follows immediately from Theorem \ref{t}.
	
	We have equality if and only if 
	\begin{align*}
		&	S^{1/4}G(\varsigma)= VS^{-1/4}F(\varsigma)\;,\varsigma\in\mathfrak{A}\\
		& \Leftrightarrow G(\varsigma)=VS^{-1/2}F(\varsigma).
	\end{align*}
This completes the proof.
\end{proof}

\begin{proposition}Let $ F=\{F_{\varsigma}\}_{\varsigma\in \mathfrak{A}} $ be a $ V- $ controlled integral frame for $ \mathcal{H}^{N} $. Then
	\begin{equation*}
		\int_{\mathfrak{A}} \langle VF_{\varsigma},F_{\varsigma}\rangle d\mu(\varsigma)= N.
	\end{equation*} 
\end{proposition}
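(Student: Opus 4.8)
The plan is to identify the integral $\int_{\mathfrak{A}} \langle VF_{\varsigma},F_{\varsigma}\rangle\, d\mu(\varsigma)$ with the trace of the controlled frame operator $S_{VF}$, and then to read off the value $N=\dim\mathcal{H}^{N}$ from the Parseval normalization $S_{VF}=\mathrm{Id}_{\mathcal{H}^{N}}$. First I would fix an orthonormal basis $\{e_{k}\}_{k=1}^{N}$ of $\mathcal{H}^{N}$ and write the trace as $\operatorname{tr}(S_{VF})=\sum_{k=1}^{N}\langle S_{VF}e_{k},e_{k}\rangle$. Substituting the definition $S_{VF}e_{k}=\int_{\mathfrak{A}}\langle e_{k},F_{\varsigma}\rangle VF_{\varsigma}\,d\mu(\varsigma)$ and moving the scalar product inside the integral gives $\langle S_{VF}e_{k},e_{k}\rangle=\int_{\mathfrak{A}}\langle e_{k},F_{\varsigma}\rangle\langle VF_{\varsigma},e_{k}\rangle\,d\mu(\varsigma)$, and interchanging the \emph{finite} sum with the integral yields
\[
\operatorname{tr}(S_{VF})=\int_{\mathfrak{A}}\sum_{k=1}^{N}\langle e_{k},F_{\varsigma}\rangle\,\langle VF_{\varsigma},e_{k}\rangle\,d\mu(\varsigma).
\]

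The next step is to collapse the inner sum. Expanding $F_{\varsigma}=\sum_{k=1}^{N}\langle F_{\varsigma},e_{k}\rangle e_{k}$ and using conjugate-linearity in the second slot, one checks that $\sum_{k=1}^{N}\langle e_{k},F_{\varsigma}\rangle\langle VF_{\varsigma},e_{k}\rangle=\langle VF_{\varsigma},F_{\varsigma}\rangle$ pointwise in $\varsigma$. Hence $\int_{\mathfrak{A}}\langle VF_{\varsigma},F_{\varsigma}\rangle\,d\mu(\varsigma)=\operatorname{tr}(S_{VF})$. Because the frame is Parseval, the controlled frame operator satisfies $S_{VF}=\mathrm{Id}_{\mathcal{H}^{N}}$, so its trace is $\operatorname{tr}(\mathrm{Id})=N$, which finishes the argument.

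The only analytic point to watch is the interchange of summation and integration; since the sum ranges over just $N$ terms this is harmless, and it is fully justified by the finiteness $\int_{\mathfrak{A}}\|VF_{\varsigma}\|^{2}\,d\mu(\varsigma)<\infty$ established for Bessel maps earlier in the excerpt. The conceptual part I would state explicitly is where the value $N$ comes from: the trace identity $\int_{\mathfrak{A}}\langle VF_{\varsigma},F_{\varsigma}\rangle\,d\mu(\varsigma)=\operatorname{tr}(S_{VF})$ holds for any $V$-controlled integral frame, but equals $N$ precisely because the Parseval condition $A=B=1$ forces $S_{VF}=\mathrm{Id}$; I would therefore make the normalization $S_{VF}=\mathrm{Id}$ the pivot of the proof rather than hide it in the computation.
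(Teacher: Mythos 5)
Your argument is essentially identical to the paper's: the paper likewise expands $N=\sum_{k=1}^{N}\Vert e_{k}\Vert^{2}$ over an orthonormal basis, writes $e_{k}=S_{VF}e_{k}$, interchanges the finite sum with the integral, and collapses $\sum_{k=1}^{N}\langle e_{k},F_{\varsigma}\rangle\langle VF_{\varsigma},e_{k}\rangle$ to $\langle VF_{\varsigma},F_{\varsigma}\rangle$, i.e., it computes $\operatorname{tr}(S_{VF})$ exactly as you do. Your one genuine improvement is making the normalization explicit: the paper's step $e_{k}=S_{VF}e_{k}$ silently assumes the frame is Parseval --- a hypothesis missing from the statement as printed --- and you are right that without it the integral equals $\operatorname{tr}(S_{VF})$ rather than $N$.
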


\begin{proof} Let $ \{e_{k}\}_{k=1}^{N} $be an orthonormal basis for $ \mathcal{H}^{N} $. 	
	We have 
	\begin{equation*}
		e_{k}=S_{VF}e_{k}= \int_{\mathfrak{A}}\langle e_{k},F_{\varsigma}\rangle VF_{\varsigma}d\mu(\varsigma).
	\end{equation*} 
	Thus
	\begin{align*}
		N= \sum_{k=1}^{N}\Vert e_{k}\Vert^{2}&=\sum_{k=1}^{N} \int_{\mathfrak{A}} \langle e_{k},F_{\varsigma}\rangle \langle VF_{\varsigma}, e_{k}\rangle d\mu(\varsigma)\\
	&=\int_{\mathfrak{A}}\sum_{k=1}^{N}\langle e_{k},F_{\varsigma}\rangle \langle VF_{\varsigma}, e_{k}\rangle d\mu(\varsigma)\\
	&=\int_{\mathfrak{A}} \langle VF_{\varsigma}, F_{\varsigma}\rangle d\mu(\varsigma).
	\end{align*}
This completes the proof.
\end{proof}


The following proposition illustrates that the orthogonal projections can be preserved controlled frames in a finite-dimensional Hilbert space.

\begin{proposition} Let $ \{F_{\varsigma}\}_{\varsigma \in\mathfrak{A}} $ be a $ V- $ controlled integral frame for  $ \mathcal{H}^{N} $,  $ E $ be a subspace of $ \mathcal{H}^{N} $ and $ U $ be an orthonogonal projection of $ \mathcal{H}^{N} $ onto $ E $ such that $ VU = UV $. Then $ \{UF_{\varsigma}\}_{\varsigma} $ is a $ V- $controlled frame for $ E $.
		 If  $ \{F_{\varsigma}\}_{\varsigma \in\mathfrak{A}} $ is a Parseval $ V- $controlled integral frame for $ \mathcal{H}^{N} $, then $ \{UF_{\varsigma} \}_{\varsigma}$ is a Parseval $ V- $controlled integral frame for $ E $.	
\end{proposition}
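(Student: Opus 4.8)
The plan is to reduce the controlled frame inequality for $\{UF_\varsigma\}$ on $E$ directly to the one already known for $\{F_\varsigma\}$ on $\mathcal{H}^N$, exploiting that $U$ is a self-adjoint idempotent fixing every vector of $E$ and that $U$ commutes with $V$. First I would record the structural facts to be used: since $U$ is an orthogonal projection onto $E$ one has $U^{\ast}=U$, $U^2=U$, and $Uf=f$ for every $f\in E$; and since $VU=UV$ with $V$ invertible, also $V^{-1}U=UV^{-1}$, so $V$ maps $E$ bijectively onto $E$. This last point guarantees that $V|_{E}$ is a genuine invertible operator on $E$, which is needed merely for the phrase ``$V$-controlled frame for $E$'' to be well posed.

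The core computation is to simplify each factor of the integrand for a fixed $f\in E$. For the first factor, $\langle f, UF_\varsigma\rangle = \langle U^{\ast}f, F_\varsigma\rangle = \langle Uf, F_\varsigma\rangle = \langle f, F_\varsigma\rangle$, using $U^{\ast}=U$ and $Uf=f$. For the second factor I would push $U$ through $V$: $\langle VUF_\varsigma, f\rangle = \langle UVF_\varsigma, f\rangle = \langle VF_\varsigma, U^{\ast}f\rangle = \langle VF_\varsigma, f\rangle$, using $VU=UV$, self-adjointness of $U$, and $Uf=f$. Multiplying and integrating then yields
\begin{equation*}
\int_{\mathfrak{A}} \langle f, UF_\varsigma\rangle\,\langle VUF_\varsigma, f\rangle\, d\mu(\varsigma) = \int_{\mathfrak{A}} \langle f, F_\varsigma\rangle\,\langle VF_\varsigma, f\rangle\, d\mu(\varsigma)
\end{equation*}
for every $f\in E$.

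With this identity in hand the conclusion is immediate. Since $\{F_\varsigma\}$ is a $V$-controlled integral frame for $\mathcal{H}^N$ with bounds $A,B$, the integral on the right lies between $A\|f\|^2$ and $B\|f\|^2$ for all $f\in\mathcal{H}^N$, in particular for all $f\in E$; hence $\{UF_\varsigma\}$ satisfies the same two-sided bound on $E$ and is a $V$-controlled frame for $E$ with the inherited bounds $A,B$. For the Parseval case one simply specializes $A=B=1$: the displayed identity shows the middle integral equals $\|f\|^2$ for every $f\in E$, so $\{UF_\varsigma\}$ is again Parseval.

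I do not anticipate a serious obstacle, as the argument is essentially a manipulation of inner products. The only place requiring care, and the true role of the hypothesis, is the second factor, where the commutativity $VU=UV$ is exactly what allows $U$ to be absorbed into $f$ via self-adjointness; without it the term $\langle VUF_\varsigma, f\rangle$ would not collapse to $\langle VF_\varsigma, f\rangle$ and the bounds would fail to transfer. I would therefore be explicit about invoking both $VU=UV$ and $U^{\ast}=U$ at that step, and would state separately that $V^{-1}U=UV^{-1}$ so that $V|_{E}$ is invertible and the target notion is meaningful.
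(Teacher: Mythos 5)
Your proof is correct and takes essentially the same route as the paper: both reduce the controlled-frame integrand for $\{UF_{\varsigma}\}$ on $E$ to that of $\{F_{\varsigma}\}$ evaluated at $Uf=f$, using exactly $U^{\ast}=U$, $U^{2}=U$, $Uf=f$ for $f\in E$, and the commutation $VU=UV$. The only cosmetic difference is in the Parseval case, where the paper verifies $S_{VUF}f=U^{2}f=f$ at the operator level while you specialize $A=B=1$ in the scalar identity (equivalent given the paper's definition), and your remark that $V^{-1}U=UV^{-1}$, so that $V$ restricts to an invertible operator on $E$, is a small well-posedness point the paper omits.
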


\begin{proof}
	For all $ f\in E $, we have
	\begin{equation*}
		A\Vert f\Vert^{2}= A\Vert Uf\Vert^{2} \leq \int_{\mathfrak{A}}\langle Uf,F_{\varsigma}\rangle \langle VF_{\varsigma}, Uf \rangle d\mu(\varsigma) \leq B \Vert Uf\Vert^{2}= B\Vert f\Vert^{2}
	\end{equation*}
and 
	\begin{equation*}
	A\Vert f\Vert^{2} \leq \int_{\mathfrak{A}}\langle f,UF_{\varsigma}\rangle \langle UVF_{\varsigma}, f \rangle d\mu(\varsigma) \leq  B\Vert f\Vert^{2},
\end{equation*}
which implies that 
\begin{equation*}
	A\Vert f\Vert^{2} \leq \int_{\mathfrak{A}}\langle f,UF_{\varsigma}\rangle \langle VUF_{\varsigma}, f \rangle d\mu(\varsigma) \leq  B\Vert f\Vert^{2}.	
\end{equation*}
Therefore,  $ \{UF_{\varsigma}\}_{\varsigma} $ is a $ V- $ controlled integral frame for $ E $. 

Suppose $ \{F_{\varsigma}\}_{\varsigma} $ is a Parseval $ V- $controlled integral frame.
 Then for every $ f\in E $, 
\begin{align*}
	S_{VUF}(f)&=\int_{\mathfrak{A}}\langle f,UF_{\varsigma}\rangle VUF_{\varsigma} d\mu(\varsigma)\\
	&= \int_{\mathfrak{A}}\langle Uf,F_{\varsigma}\rangle UVF_{\varsigma} d\mu(\varsigma)\\
	&=U\int_{\mathfrak{A}}\langle Uf,F_{\varsigma}\rangle VF_{\varsigma} d\mu(\varsigma)\\
	&=U^{2}f\\
	&=f.
\end{align*} 
Therefore, $ \{UF_{\varsigma}\} _{\varsigma}$ is a Parseval $ V- $controlled integral frame for $ E $.
\end{proof}

If $ \{F_{\varsigma}\}_{\varsigma\in\mathfrak{A}} $ is a $ V- $controlled integral frame with the controlled frame operator $ S_{VF} $, then $ S _{VF}= VS_{F} $ and $ f= \int_{\mathfrak{A}}\langle f,F_{\varsigma}\rangle (S_{VF}^{-1}V)fd\mu(\varsigma) $ for every $ f\in  \mathcal{H}^{N} $. This gives that  $ \{F_{\varsigma}\}_{\varsigma\in\mathfrak{A}} $ is a Parseval $ S_{VF}^{-1}V- $controlled integral frame, and $ \{S_{VF}^{-1}VF_{\varsigma}\}_{\varsigma\in\mathfrak{A}} $.

\begin{theorem}Let $ \{F_{\varsigma}\}_{\varsigma\in\mathfrak{A}} $ be a continuous frame with the frame operator $ S_{F} $. 
Then  every tight controlled integral frame $ \{F_{\varsigma}\}_{\varsigma\in\mathfrak{A}} $ is exactly an $ \alpha- $tight 
	$ \alpha S_{F}^{-1} $ controlled integral frame for $ \alpha \in \mathbb{C} $.	
\end{theorem}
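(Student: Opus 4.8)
The plan is to reduce everything to the single operator identity $S_{VF}=VS_{F}$ that was recorded in Proposition~\ref{p9} and used repeatedly above, together with the fact that on the finite-dimensional space $\mathcal{H}^{N}$ the frame operator $S_{F}$ is positive and invertible. Once this identity is in hand, the statement becomes a short computation characterizing exactly when the control operator forces tightness.

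First I would record the weak-form expression of the controlled quadratic form. For every $f\in\mathcal{H}^{N}$,
$$\int_{\mathfrak{A}}\langle f,F_{\varsigma}\rangle\langle VF_{\varsigma},f\rangle\,d\mu(\varsigma)=\langle S_{VF}f,f\rangle=\langle VS_{F}f,f\rangle,$$
so that $\{F_{\varsigma}\}_{\varsigma}$ is $\alpha$-tight as a $V$-controlled integral frame precisely when $\langle VS_{F}f,f\rangle=\alpha\|f\|^{2}$ for all $f$, i.e. precisely when $S_{VF}=VS_{F}=\alpha I$. This converts the tightness hypothesis into a clean operator equation.

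Next, taking $V=\alpha S_{F}^{-1}$, which is well defined and invertible since $S_{F}\in GL^{+}(\mathcal{H}^{N})$, I would substitute directly to obtain $S_{VF}=(\alpha S_{F}^{-1})S_{F}=\alpha I$, whence
$$\int_{\mathfrak{A}}\langle f,F_{\varsigma}\rangle\langle VF_{\varsigma},f\rangle\,d\mu(\varsigma)=\alpha\|f\|^{2},$$
which is exactly the $\alpha$-tight controlled frame identity. Conversely, if $\{F_{\varsigma}\}_{\varsigma}$ is an $\alpha$-tight $V$-controlled integral frame, the computation of the first step gives $VS_{F}=\alpha I$, and multiplying on the right by $S_{F}^{-1}$ yields $V=\alpha S_{F}^{-1}$. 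This establishes the claimed equivalence: the tight controlled frames are exactly those whose control operator equals $\alpha S_{F}^{-1}$.

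The only delicate point is the role of $\alpha\in\mathbb{C}$. For $\alpha S_{F}^{-1}$ to be an admissible (in particular invertible and positive) control operator, and for the two-sided bound $0<A\le B<\infty$ in the definition of a $V$-controlled integral frame to make literal sense, one needs $\alpha$ to be a positive real, in which case $\alpha S_{F}^{-1}$ is indeed positive and invertible and the statement is genuinely a statement about $\alpha$-tightness. For general $\alpha\in\mathbb{C}$ the equation $S_{VF}=\alpha I$ should be read in the weak/operator sense rather than as a pair of norm inequalities. Beyond this bookkeeping I expect no real obstacle, since the entire argument rests on the invertibility of $S_{F}$, which is automatic in finite dimension.
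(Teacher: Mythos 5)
Your proof is correct and follows essentially the same route as the paper: translate $\alpha$-tightness into the operator equation $S_{VF}=\alpha I$, invoke the identity $S_{VF}=VS_{F}$, and solve $V=\alpha S_{F}^{-1}$ using the invertibility of $S_{F}$ in finite dimension. Your closing remark that $\alpha$ must in fact be a positive real for the two-sided frame bounds to make literal sense is a point of care the paper's proof glosses over, but it does not change the argument.
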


\begin{proof}Let $ \{F_{\varsigma}\}_{\varsigma\in\mathfrak{A}} $ be an $ \alpha- $tight $ V- $controlled integral frame, for $ \alpha \in\mathbb{C} $. Then for $ f\in \mathcal{H}^{N} $, $\alpha f=\int_{\mathfrak{A}} \langle f,F_{\varsigma}\rangle VF_{\varsigma}d\mu(\varsigma)  $ and so $ \alpha I= S_{VF} = VS_{F}$ and $ V= \alpha S^{-1}_{F} $, i.e., $ \{F_{\varsigma}\}_{\varsigma\in\mathfrak{A}} $ is a  $ \alpha- $tight 
	$ \alpha S_{F}^{-1} $ controlled integral frame. 	
\end{proof}


We need to recall properties of the trace of linear operators on  $  \mathcal{H}^{N} $ and then consider trace of an operator by controlled integral frames.

The trace of a linear operator $ L\in L(\mathcal{H}^{N}) $ is defined by 
\begin{equation*}
	Tr(L)= \sum_{k=1}^{N}\langle Le_{k},e_{k}\rangle,
\end{equation*}
where $ \{e_{k}\}_{k=1}^{N} $ is an orthonormal basis for $  \mathcal{H}^{N} $. If $ L _{1}$ and $ L_{2} $ are self-adjoint positive operators, then $ 0\leq Tr(L_{1}L_{2})  \leq Tr(L_{1}) \cdot Tr(L_{2})$.

\begin{proposition}Let $ \{F_{\varsigma}\}_{\varsigma\in\mathfrak{A}} $ be a $ V- $controlled integral frame such that $ V\in GL(\mathcal{H}^{N}) $ is  a self-adjoint operator. Then 
	\begin{equation*}
		Tr(S_{VF})\leq Tr(V)\int_{\mathfrak{A}} \Vert F_{\varsigma}\Vert ^{2}d\mu(\varsigma).
	\end{equation*}
	
\end{proposition}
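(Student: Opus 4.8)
The plan is to reduce the claim to the trace inequality for positive operators recalled just above the statement, namely $Tr(L_1 L_2) \leq Tr(L_1)\,Tr(L_2)$ for self-adjoint positive $L_1, L_2$, applied with $L_1 = V$ and $L_2 = S_F$.

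First I would record the factorization $S_{VF} = VS_F$, where $S_F$ denotes the ordinary continuous frame operator of $\{F_\varsigma\}$; this is exactly the identity already used in Proposition \ref{p9}, and it follows from the definitions because $V$ is bounded and may be pulled out of the weakly convergent integral defining $S_{VF}$. Consequently $Tr(S_{VF}) = Tr(VS_F)$. Next I would compute $Tr(S_F)$ explicitly. Fixing an orthonormal basis $\{e_k\}_{k=1}^N$ of $\mathcal{H}^{N}$ and using the definition of the trace together with Parseval's identity gives
\begin{align*}
Tr(S_F) &= \sum_{k=1}^N \langle S_F e_k, e_k\rangle = \sum_{k=1}^N \int_{\mathfrak{A}} \langle e_k, F_\varsigma\rangle\langle F_\varsigma, e_k\rangle\,d\mu(\varsigma)\\
&= \int_{\mathfrak{A}} \sum_{k=1}^N \vert\langle F_\varsigma, e_k\rangle\vert^2\,d\mu(\varsigma) = \int_{\mathfrak{A}} \Vert F_\varsigma\Vert^2\,d\mu(\varsigma),
\end{align*}
where the interchange of the finite sum and the integral is harmless since all integrands are nonnegative.

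It then remains to invoke the trace inequality. The operator $S_F$ is a frame operator, hence self-adjoint and positive, while $V$ is self-adjoint by hypothesis. Applying $Tr(VS_F)\leq Tr(V)\,Tr(S_F)$ and substituting the value of $Tr(S_F)$ computed above yields
\begin{equation*}
Tr(S_{VF}) = Tr(VS_F) \leq Tr(V)\,Tr(S_F) = Tr(V)\int_{\mathfrak{A}}\Vert F_\varsigma\Vert^2\,d\mu(\varsigma),
\end{equation*}
which is precisely the asserted estimate.

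The one delicate point — and the step I expect to require the most care — is the legitimacy of the trace inequality, since it holds only for \emph{positive} operators, whereas $V$ is assumed merely self-adjoint. To close this gap I would argue that $V$ is in fact positive: by Proposition \ref{p1} the controlled frame operator $S_{VF}=VS_F$ is positive, hence self-adjoint, so $VS_F=(VS_F)^{\ast}=S_F V$; thus $V$ commutes with $S_F$, and $V = S_{VF}S_F^{-1}$ is a product of commuting positive operators and therefore positive. (Alternatively, one may simply strengthen the hypothesis to $V\in GL^{+}(\mathcal{H}^{N})$, under which positivity is immediate and the remaining computation is unchanged.)
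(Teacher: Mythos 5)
Your argument is correct and follows the same route as the paper: factor $S_{VF}=VS_F$, identify $Tr(S_F)$ with $\int_{\mathfrak{A}}\Vert F_{\varsigma}\Vert^{2}\,d\mu(\varsigma)$ (the paper does this via the eigenvalues of $S_F$, you via an orthonormal basis and Parseval's identity --- the same computation), and apply $Tr(VS_F)\le Tr(V)\,Tr(S_F)$. The one place you genuinely add something is your final paragraph: the trace inequality recalled just before the statement holds for self-adjoint \emph{positive} operators, yet the hypothesis assumes $V$ only self-adjoint and invertible, and the paper's proof applies the inequality to $V$ and $S_F$ without comment, silently using positivity of $V$. Your repair --- $S_{VF}$ is positive by Proposition \ref{p1}, hence self-adjoint, so $VS_F=(VS_F)^{\ast}=S_FV$, whence $V=S_{VF}S_F^{-1}$ is a product of commuting positive operators and therefore positive --- is sound, with the caveat that it uses the complex-space fact that positivity of the quadratic form implies self-adjointness; over $\mathbb{R}$ one can argue even more directly, since testing the lower controlled-frame bound on an eigenvector $e$ of $V$ gives $\lambda\langle S_Fe,e\rangle\ge A\Vert e\Vert^{2}>0$ and forces every eigenvalue $\lambda$ of $V$ to be positive. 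So your proposal is not merely equivalent to the paper's proof; it closes a small genuine gap in it.
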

\begin{proof}Let $ \{\alpha_{k}\}_{k=1}^{N} $ be  the set of eigenvalues of the operator frame $ S_{F} $. Then $ Tr(S_{F})= \sum_{k=1}^{N}\alpha_{k} = \int_{\mathfrak{A}}\Vert F_{\varsigma}\Vert ^{2} d\mu(\varsigma)$.
	
Therefore, 
\begin{equation*}
	Tr(S_{VF})= Tr(VS_{F})\leq Tr(V)Tr(S_{F})= Tr(V)\sum_{k=1}^{N}\alpha_{k}= Tr(V)\int_{\mathfrak{A}}\Vert F_{\varsigma}\Vert ^{2}d\mu(\varsigma).
\end{equation*}
This completes the proof.	
\end{proof}

\begin{proposition}Let $ \{F_{\varsigma}\}_{\varsigma \in \mathfrak{A}} $ be a Parseval $ V- $controlled integral frame for $ \mathcal{H}^{N} $ and $ G  $ be a linear operator on $ \mathcal{H}^{N} $. Then $ Tr(G)= \int_{\mathfrak{A}} \langle GVF_{\varsigma},F_{\varsigma} \rangle d\mu(\varsigma)$. 	
\end{proposition}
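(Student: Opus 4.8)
The plan is to reduce the identity to the Parseval reconstruction formula together with a single orthonormal-basis expansion. Since $\{F_{\varsigma}\}_{\varsigma}$ is a Parseval $V$-controlled integral frame, the controlled frame operator is the identity, $S_{VF}=I$, so every $g\in\mathcal{H}^{N}$ is recovered by
\begin{equation*}
g=\int_{\mathfrak{A}}\langle g,F_{\varsigma}\rangle VF_{\varsigma}\,d\mu(\varsigma).
\end{equation*}
This is precisely the $A=B=1$ case of the defining inequality, and it is the only structural input the argument requires.

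First I would fix an orthonormal basis $\{e_{k}\}_{k=1}^{N}$ of $\mathcal{H}^{N}$ and write $Tr(G)=\sum_{k=1}^{N}\langle Ge_{k},e_{k}\rangle$. Applying $G$ to the reconstruction of each $e_{k}$ and pairing with $e_{k}$ gives
\begin{equation*}
\langle Ge_{k},e_{k}\rangle=\int_{\mathfrak{A}}\langle e_{k},F_{\varsigma}\rangle\langle GVF_{\varsigma},e_{k}\rangle\,d\mu(\varsigma).
\end{equation*}
Summing over $k$ and interchanging the finite sum with the integral then produces $Tr(G)=\int_{\mathfrak{A}}\sum_{k=1}^{N}\langle GVF_{\varsigma},e_{k}\rangle\langle e_{k},F_{\varsigma}\rangle\,d\mu(\varsigma)$. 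The last step is to collapse the inner sum: expanding $F_{\varsigma}=\sum_{k=1}^{N}\langle F_{\varsigma},e_{k}\rangle e_{k}$ in the second slot of the inner product shows that
\begin{equation*}
\sum_{k=1}^{N}\langle GVF_{\varsigma},e_{k}\rangle\langle e_{k},F_{\varsigma}\rangle=\langle GVF_{\varsigma},F_{\varsigma}\rangle,
\end{equation*}
which yields the asserted formula.

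I do not anticipate a genuine difficulty here. The only point that formally needs justification is the exchange of summation and integration, but since $\mathcal{H}^{N}$ is $N$-dimensional this is a finite sum, so the interchange is valid term by term with no convergence hypotheses beyond integrability of each $\langle GVF_{\varsigma},e_{k}\rangle\langle e_{k},F_{\varsigma}\rangle$. The substantive content is entirely carried by the Parseval reconstruction $S_{VF}=I$; everything else is the standard basis-expansion computation already used in the preceding proposition on $\int_{\mathfrak{A}}\langle VF_{\varsigma},F_{\varsigma}\rangle\,d\mu(\varsigma)=N$.
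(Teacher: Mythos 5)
Your proposal is correct and takes essentially the same route as the paper: an orthonormal basis expansion of the trace, the Parseval reconstruction $S_{VF}=I$, and an interchange of the finite sum with the integral followed by collapsing $\sum_{k=1}^{N}\langle \cdot,e_{k}\rangle\langle e_{k},\cdot\rangle$. The only cosmetic difference is that you reconstruct $e_{k}$ and push $G$ inside the integral, whereas the paper reconstructs $Ge_{k}$ and passes through the adjoint $G^{\ast}$ before collapsing the sum on $VF_{\varsigma}$ --- a trivial rearrangement of the same computation.
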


\begin{proof}Let  $ \{e_{k}\}_{k=1}^{N} $ be  an orthonormal basis for $  \mathcal{H}^{N} $. Then $ Tr(G)= \sum_{k=1}^{N}\langle Ge_{k},e_{k}\rangle  $. Since $ \{F_{\varsigma}\}_{\varsigma} $ is a Parseval $ V- $controlled integral frame, for $ k\in\{1,2,3,...\} $, we have 
	\begin{equation*}
		Ge_{k}= \int_{\mathfrak{A}} \langle Ge_{k},F_{\varsigma}\rangle V F_{\varsigma}d\mu(\varsigma)
	\end{equation*}
	and 
	\begin{align*}
		Tr(G)&= \sum_{k=1}^{N}\langle  \int_{\mathfrak{A}} \langle Ge_{k},F_{\varsigma}\rangle  VF_{\varsigma}, e_{k}\rangle d\mu(\varsigma)\\
		&=\sum_{k=1}^{N} \int_{\mathfrak{A}} \langle e_{k},G^{\ast}F_{\varsigma}\rangle \langle VF_{\varsigma},e_{k}\rangle d\mu(\varsigma)\\
		&= \int_{\mathfrak{A}} \langle \sum_{k=1}^{N}\langle VF_{\varsigma},e_{k}\rangle e_{k},G^{\ast}F_{\varsigma}\rangle d\mu(\varsigma)\\
		&= \int_{\mathfrak{A}}\langle VF_{\varsigma},G^{\ast}F_{\varsigma}\rangle d\mu(\varsigma)\\
		&=\int_{\mathfrak{A}}\langle GVF_{\varsigma},F_{\varsigma}\rangle d\mu(\varsigma).
	\end{align*}
This completes the proof.
\end{proof}

\begin{remark}
	Every $ \alpha- $ tight $ V- $controlled integral frame $\{ F_{\varsigma}\}_{\varsigma} $ induces a Parseval controlled integral frame.
		We have for every $ f\in\mathcal{H}^{N} $
	\begin{equation*}
		\alpha f= \int_{\mathfrak{A}} \langle f,F_{\varsigma}\rangle VF_{\varsigma}d\mu(\varsigma),
	\end{equation*} 
	and then
	
	\begin{equation*}
		f= \int_{\mathfrak{A}} \langle f,F_{\varsigma}\rangle (\alpha^{-1}V)F_{\varsigma} d\mu(\varsigma).
	\end{equation*}
This means that $ \{F_{\varsigma}\}_{\varsigma} $ is a Parseval $ \alpha^{-1}V -$ controlled frame.

Also, $ \{F_{\varsigma}\} $ is equivalent to $ \{(\alpha^{-1}V)F_{\varsigma}\}_{\varsigma} $ and $ \{(\alpha^{-1}V)F_{\varsigma}\}_{\varsigma} $ is a dual for $ \{F_{\varsigma}\} $.
We recall that a frame $ \{F_{\varsigma} \}_{\varsigma}$ is equivalent to a frame $ \{G_{\varsigma}\}_{\varsigma} $ if there exists an invertible operator $ P \in B(\mathcal{H}^{N}) $ such that $ F_{\varsigma}=P G_{\varsigma} $.
\end{remark}

\begin{example}
	Consider the Hilbert space $\L^{2}( \mathbb{R})$ and  
	$\mathfrak{A} = [0,1] $ endewed with the Lebesgue measure.	
	Let 	$ F=\{F_{\varsigma}\}_{\varsigma} $ and $ G=\{G_{\varsigma}\}_{\varsigma} $ with	
	$ F_{\varsigma}(x_{1},x_{2},x_{3},...)= \dfrac{x_{\varsigma}}{3} $ and $ G_{\varsigma}(x_{1},x_{2},x_{3},...)= \dfrac{x_{\varsigma}}{2} $. Then 
	we obtain
	\begin{equation*}
		\int_{\mathfrak{A}} \Vert F_{\varsigma}(x) \Vert^{2} d\mu(\varsigma) = \dfrac{1}{9}\Vert x\Vert^{2}
	\end{equation*}
	and 
	\begin{equation*}
		\int_{\mathfrak{A}} \Vert G_{\varsigma}(x) \Vert^{2} d\mu(\varsigma) = \dfrac{1}{4}\Vert x\Vert^{2}.
	\end{equation*}
	Define $ P: \mathcal{H}\rightarrow \mathcal{H} $ by $ P(x_{1},x_{2},x_{3},...)= (\dfrac{2}{3}x_{1},\dfrac{2}{3}x_{2},\dfrac{2}{3}x_{3},...)$. 	
	Then $ F_{\varsigma}P= G_{\varsigma}$ for all $ \varsigma \in \mathfrak{A} $.
\end{example}

\begin{theorem} Let $ \{F_{\varsigma}\}_{\varsigma \in \mathfrak{A}} $ be a $ V- $controlled integral frame. Then  $ \{F_{\varsigma}\}_{\varsigma \in \mathfrak{A}} $  has a dual frame that is equivalent to 
$ \{F_{\varsigma}\}_{\varsigma \in \mathfrak{A}} $.
\end{theorem}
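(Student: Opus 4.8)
The plan is to produce the canonical dual frame and verify it satisfies the equivalence relation recalled in the preceding remark. First I would invoke Proposition \ref{p9}: since $\{F_{\varsigma}\}_{\varsigma\in\mathfrak{A}}$ is a $V$-controlled integral frame with controlled frame operator $S_{VF}$, it is a continuous frame for $\mathcal{H}^{N}$ with ordinary frame operator $S_{F}=V^{-1}S_{VF}$. Being the frame operator of a frame in a finite-dimensional space, $S_{F}$ is positive and invertible, which is the only structural fact the argument relies on.

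Next I would define the candidate dual by $G_{\varsigma}:=S_{F}^{-1}F_{\varsigma}$. As recalled in the preliminaries, $S_{F}^{-1}F$ is a dual of $F$; concretely, for every $f,g\in\mathcal{H}^{N}$ one writes $f=S_{F}^{-1}S_{F}f=\int_{\mathfrak{A}}\langle f,S_{F}^{-1}F_{\varsigma}\rangle F_{\varsigma}\,d\mu(\varsigma)$ and pairs with $g$ to obtain
\begin{equation*}
\langle f,g\rangle=\int_{\mathfrak{A}}\langle f,G_{\varsigma}\rangle\langle g,F_{\varsigma}\rangle\,d\mu(\varsigma).
\end{equation*}
Hence $\{G_{\varsigma}\}_{\varsigma}$ is a genuine dual frame of $\{F_{\varsigma}\}_{\varsigma}$.

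Finally I would verify the equivalence. By construction $G_{\varsigma}=S_{F}^{-1}F_{\varsigma}$, equivalently $F_{\varsigma}=S_{F}G_{\varsigma}$ for every $\varsigma\in\mathfrak{A}$. Since $S_{F}\in B(\mathcal{H}^{N})$ is invertible, setting $P:=S_{F}$ gives $F_{\varsigma}=PG_{\varsigma}$, which is exactly the relation defining equivalence of frames stated in the remark. Therefore $\{G_{\varsigma}\}_{\varsigma}$ is a dual frame of $\{F_{\varsigma}\}_{\varsigma}$ that is equivalent to it.

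I do not anticipate a serious obstacle: the entire argument rests on the invertibility of $S_{F}$, already guaranteed in finite dimensions by Proposition \ref{p9}. The only point requiring care is bookkeeping—keeping the ordinary frame operator $S_{F}=V^{-1}S_{VF}$ distinct from the controlled operator $S_{VF}$, and confirming that the intertwining operator $P$ implementing the equivalence is the invertible operator $S_{F}$ (or its inverse, according to the direction fixed in the definition).
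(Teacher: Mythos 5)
Your proof is correct and essentially the same as the paper's: the paper also exhibits the canonical dual, writing it as $\{S_{VF}^{-1}VF_{\varsigma}\}_{\varsigma}$, which coincides with your $\{S_{F}^{-1}F_{\varsigma}\}_{\varsigma}$ since $S_{F}^{-1}=(V^{-1}S_{VF})^{-1}=S_{VF}^{-1}V$, and then notes the equivalence via this invertible operator. Your detour through Proposition \ref{p9} merely makes explicit the invertibility of $S_{F}$ that the paper uses implicitly, and your verification of duality is the standard one the paper also performs by applying the inverse frame operator to the reconstruction formula.
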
be

 \begin{proof} Let $ S_{VG} $ be  the frame operator of $ \{F_{\varsigma}\}_{\varsigma} $.
 	 	Then for $ f\in\mathcal{H}^{N} $, 
 	\begin{equation*}
 		S_{VG}f= \int_{\mathfrak{A}} \langle f,F_{\varsigma}\rangle VF_{\varsigma}. d\mu(\varsigma)
 	\end{equation*} 
 	Since $ S_{VG} $ is invertible, we have 
 	\begin{equation*}
 		f= \int_{\mathfrak{A}} \langle f, F_{\varsigma}\rangle (S^{-1}_{VG}V)F_{\varsigma}d\mu(\varsigma).
 	\end{equation*} 
 	Thus  $ \{F_{\varsigma}\}_{\varsigma} $ is a Parseval controlled integral frame and the frame $ \{S_{VF}^{-1}VF_{\varsigma}\}_{\varsigma} $ is a dual frame for $ \{F_{\varsigma}\}_{\varsigma} $ such that it is equivalent to $ \{F_{\varsigma}\}_{\varsigma} $.
 \end{proof}

\begin{proposition}
	If $ \{G_{\varsigma}\}_{\varsigma} $ is a dual of $ \{F_{\varsigma}\}_{\varsigma}$ such that it is equivalent to $ \{F_{\varsigma}\}_{\varsigma} $, then   $ \{G_{\varsigma}\}_{\varsigma} $ induces a Parseval controlled integral frame of  $ \{F_{\varsigma}\}_{\varsigma} $. 
\end{proposition}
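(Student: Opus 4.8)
The plan is to use the two hypotheses together to pin down the equivalence operator, and then to read the Parseval property straight off the controlled frame operator. First I would unwind the equivalence: since $\{G_{\varsigma}\}_{\varsigma}$ is equivalent to $\{F_{\varsigma}\}_{\varsigma}$, the definition recalled in the preceding remark supplies an invertible $P\in B(\mathcal{H}^{N})$ with $G_{\varsigma}=PF_{\varsigma}$ for every $\varsigma\in\mathfrak{A}$. The goal ``$\{G_{\varsigma}\}_{\varsigma}$ induces a Parseval controlled integral frame of $\{F_{\varsigma}\}_{\varsigma}$'' I read, exactly as in the theorem immediately above and in the remark, as the assertion that this operator $P$ is a legitimate control operator making $\{F_{\varsigma}\}_{\varsigma}$ a Parseval $P$-controlled integral frame, i.e.\ that $S_{PF}=I$.

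Next I would exploit the duality. The dual relation, written in its reconstruction form, gives $f=\int_{\mathfrak{A}}\langle f,G_{\varsigma}\rangle F_{\varsigma}\,d\mu(\varsigma)$ in the weak sense for all $f\in\mathcal{H}^{N}$. Substituting $G_{\varsigma}=PF_{\varsigma}$ and moving $P$ across the inner product, I would compute
\[
f=\int_{\mathfrak{A}}\langle f,PF_{\varsigma}\rangle F_{\varsigma}\,d\mu(\varsigma)=\int_{\mathfrak{A}}\langle P^{\ast}f,F_{\varsigma}\rangle F_{\varsigma}\,d\mu(\varsigma)=S_{F}P^{\ast}f.
\]
Since this holds for every $f$, we obtain $S_{F}P^{\ast}=I$; because $S_{F}$ is self-adjoint and invertible this forces $P^{\ast}=S_{F}^{-1}$ and hence $P=S_{F}^{-1}$. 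In particular $P$ is positive and invertible, so it is an admissible control operator, and $G_{\varsigma}=S_{F}^{-1}F_{\varsigma}$ is exactly the canonical dual. This identification of $P$ with $S_{F}^{-1}$ is the conceptual heart of the statement: neither hypothesis alone determines $P$, but the two together do.

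Finally I would read off the Parseval property. Using $P=S_{F}^{-1}$, the $P$-controlled frame operator of $\{F_{\varsigma}\}_{\varsigma}$ is
\[
S_{PF}f=\int_{\mathfrak{A}}\langle f,F_{\varsigma}\rangle PF_{\varsigma}\,d\mu(\varsigma)=P\int_{\mathfrak{A}}\langle f,F_{\varsigma}\rangle F_{\varsigma}\,d\mu(\varsigma)=PS_{F}f=S_{F}^{-1}S_{F}f=f,
\]
so $S_{PF}=I$ and $\{F_{\varsigma}\}_{\varsigma}$ is a Parseval $P$-controlled integral frame, which is the claimed induced Parseval controlled integral frame. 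The main obstacle I anticipate is purely formal: passing rigorously from the integral duality identity to the operator equation $S_{F}P^{\ast}=I$, which requires justifying the step that identifies $\int_{\mathfrak{A}}\langle f,G_{\varsigma}\rangle F_{\varsigma}\,d\mu$ with $S_{F}P^{\ast}f$ (handled weakly against an arbitrary $g\in\mathcal{H}^{N}$, as in the proof of Proposition \ref{p9}), together with keeping the adjoint and conjugation bookkeeping consistent with the inner-product convention used in the paper. Once $P=S_{F}^{-1}$ is secured, the remaining computation is routine.
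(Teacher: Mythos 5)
Your proof is correct, and it reaches the conclusion by a slightly different (and more informative) route than the paper's. The paper's proof is a one-step substitution: it writes the duality in the orientation $f=\int_{\mathfrak{A}}\langle f,F_{\varsigma}\rangle G_{\varsigma}\,d\mu(\varsigma)$, plugs in the equivalence $G_{\varsigma}=VF_{\varsigma}$, and reads off $f=\int_{\mathfrak{A}}\langle f,F_{\varsigma}\rangle VF_{\varsigma}\,d\mu(\varsigma)=S_{VF}f$, i.e.\ $S_{VF}=I$, without ever computing $V$. You instead use the adjoint orientation $f=\int_{\mathfrak{A}}\langle f,G_{\varsigma}\rangle F_{\varsigma}\,d\mu(\varsigma)$, which forces $S_{F}P^{\ast}=I$ and hence the explicit identification $P=S_{F}^{-1}$ (via invertibility and self-adjointness of $S_{F}$), after which $S_{PF}=PS_{F}=I$ is immediate. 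The two orientations are interchangeable here, since $T_{F}T_{G}^{\ast}=I$ if and only if $T_{G}T_{F}^{\ast}=I$ by taking adjoints, so your detour is legitimate; what it buys is the sharper conclusion that the two hypotheses uniquely determine the equivalence operator as $S_{F}^{-1}$ — that is, the dual in question is necessarily the canonical dual and the induced control operator is automatically positive — facts the paper's one-line substitution never makes explicit. What it costs is the extra dependence on the adjoint bookkeeping you yourself flagged, which the paper's orientation avoids entirely. One small mismatch worth noting: the paper's preceding remark defines equivalence by $F_{\varsigma}=PG_{\varsigma}$ rather than $G_{\varsigma}=PF_{\varsigma}$; since $P$ is invertible this is harmless (your $P$ is the paper's $P^{-1}$), and either way the induced control operator comes out as $S_{F}^{-1}$.
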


\begin{proof} Let  $ \{G_{\varsigma}\}_{\varsigma} $ be a  dual of  $ \{F_{\varsigma}\}_{\varsigma} $
	such that it is equivalent to  $ \{F_{\varsigma}\}_{\varsigma} $. Then there exists an  invertible operator $ V $ such that $ G_{\varsigma}=VF^{\varsigma} $ for every $ \varsigma \in\mathfrak{A} $.
	We have 
	\begin{equation*}
		f=\int_{\mathfrak{A}} \langle f,F_{\varsigma} \rangle G_{\varsigma} d\mu(\varsigma),\;\forall f\in\mathcal{H}^{N}.
	\end{equation*}
Then  $ \{F_{\varsigma}\}_{\varsigma} $ is a Parseval $ V- $controlled integral frame.
\end{proof}
\section{Conclusion}
In this manuscript we introduced and characterized controlled finite continuous frames particularly Parseval controlled finite continuous frames as a subset of dual frames and we reviewed some notions and properties of operators and frames in Hilbert spaces. Also, we defined controlled finite continuous frames and  we gave their properties. Gramian matrix and its properties for controlled finite continuous frames are examined.
In the end we studied controlled finite continuous frames as a proper subset of dual frames is presented by the equivalent frames. 

We will apply these results in a future work in Hardy and Sobolev spaces.
\medskip

\section*{Declarations}

\medskip

\noindent \textbf{Availablity of data and materials}\newline
\noindent Not applicable.

\medskip

\noindent \textbf{Competing  interest}\newline
\noindent The authors declare that they have no competing interests.

\medskip

\noindent \textbf{Fundings}\newline
\noindent  Authors declare that there is no funding available for this article.

\medskip

\noindent \textbf{Authors' contributions}\newline
\noindent The authors equally conceived of the study, participated in its
design and coordination, drafted the manuscript, participated in the
sequence alignment, and read and approved the final manuscript. 

\medskip


\end{document}